\newcommand{\D}{\mathcal D}
\newcommand{\s}{\mathcal S}
\newcommand{\Z}{\mathbb Z}
\newcommand{\R}{\mathbb R}
\newcommand{\N}{\mathbb N}
\newcommand{\E}{\mathbb E}
\renewcommand{\phi}{\varphi}
\newcommand{\la}{\ensuremath{\Lambda}}
\newcommand{\si}{\ensuremath{\sigma}}
\newcommand{\rphis}[5]{\,_{#1}\varphi_{#2}\!\left( \genfrac{.}{.}{0pt}{}{#3}{#4}
\,;#5 \right)}
\newcommand{\A}{ {A}}
\def\1{{\mathchoice {\rm 1\mskip-4mu l} {\rm 1\mskip-4mu l}
{\rm 1\mskip-4.5mu l} {\rm 1\mskip-5mu l}}}
\newtheorem{theorem}{{\small T}{\scriptsize HEOREM}}[section]
\newtheorem{definition}[theorem]{{\bf{\small D}{\scriptsize EFINITION}}}
\newtheorem{corollary}[theorem]{{\bf{\small C}{\scriptsize OROLLARY}}}
\newtheorem{proposition}[theorem]{{\bf{\small P}{\scriptsize ROPOSITION}}}
\newtheorem{lemma}[theorem]{{\bf{\small L}{\scriptsize EMMA}}}
\newtheorem{remark}[theorem]{{\bf{\small R}{\scriptsize EMARK}}}
\renewenvironment{proof}[1]
{\noindent{{\bf{\small{P}{\scriptsize ROOF}}}.}\hspace{0.1cm} #1} {$\;\qed$\newline}
\newcommand{\beq}{\begin{eqnarray}}
\newcommand{\eeq}{\end{eqnarray}}
\newcommand{\ba}{\begin{align*}}
\newcommand{\ea}{\end{align*}}
\newcommand{\be}{\begin{equation}}
\newcommand{\ee}{\end{equation}}
\newcommand{\bl}{\begin{lemma}}
\newcommand{\el}{\end{lemma}}
\newcommand{\br}{\begin{remark}}
\newcommand{\er}{\end{remark}}
\newcommand{\bt}{\begin{theorem}}
\newcommand{\et}{\end{theorem}}
\newcommand{\bd}{\begin{definition}}
\newcommand{\ed}{\end{definition}}
\newcommand{\bp}{\begin{proposition}}
\newcommand{\ep}{\end{proposition}}
\newcommand{\bc}{\begin{corollary}}
\newcommand{\ec}{\end{corollary}}
\newcommand{\bpr}{\begin{proof}}
\newcommand{\epr}{\end{proof}}
\newcommand{\bi}{\begin{itemize}}
\newcommand{\ei}{\end{itemize}}
\newcommand{\ben}{\begin{enumerate}}
\newcommand{\een}{\end{enumerate}}
\newcommand{\caL}{{\mathcal L}}
\renewcommand{\(}{\left(}        \renewcommand{\)}{\right)}
     \newcommand{\nn}{\nonumber}
\title{$q-$Orthogonal dualities for asymmetric particle systems}
\author{Gioia Carinci\footnote{University of Modena and Reggio Emilia, FIM, via G. Campi 213/b, 41125 Modena, Italy.} \quad Chiara Franceschini\footnote{Instituto Superior T\'ecnico, CAMGSD, Av. Rovisco Pais, 1049-001 Lisboa, Portugal.}\quad
Wolter Groenevelt\footnote{Technische Universiteit Delft, DIAM, P.O. Box 5031, 2600 GA Delft, The Netherlands.}
}
\date{\today}
\begin{document}

\maketitle
\abstract{We study a class of interacting particle systems with asymmetric interaction showing a self-duality property. The class includes the ASEP($q,\theta$), asymmetric exclusion process, with a repulsive interaction, allowing up to $\theta\in \N$  particles in each site, and the ASIP$(q,\theta)$, $\theta\in \R^+$, asymmetric inclusion process, that is its attractive counterpart.  We extend to the asymmetric setting the investigation of orthogonal duality properties done in \cite{CFGGR} for symmetric processes. The analysis leads to {multivariate} $q-$analogues of {Krawtchouk} polynomials and Meixner polynomials as orthogonal duality functions for the generalized asymmetric exclusion process and its asymmetric inclusion version, respectively. We also show how the $q$-Krawtchouk orthogonality relations can be used to compute exponential moments and correlations of  ASEP($q,\theta$).

\section{Introduction}
In this paper we study two models of interacting particle systems  with asymmetric jump rates exhibiting a self-duality property. The first one is known in the literature as the \emph{generalized asymmetric simple exclusion process},  ASEP($q,\theta$), $\theta\in\N$ \cite{CGRS}. This is a higher spin version of the asymmetric simple exclusion process ASEP$(q)$ (corresponding to the choice  $\theta=1$)  in which particles are repelled from each other and every site can host at most $\theta\in \N$ particles.
The second process is the ASIP($q,\theta$), $\theta\in(0,\infty)$,  \emph{asymmetric simple inclusion process}, \cite{CGRS1}, where the parameter $\theta$  tunes the intensity of the attraction between particles (the smaller the $\theta$, the higher the attraction). Particles move in a finite one-dimensional lattice and  the parameter $q \in (0,1)$ tunes the asymmetry in a certain direction. In \cite{CGRS,CGRS1} a self-duality property has been shown for these models.

{Stochastic duality} is an advantageous tool used in the study of interacting particle systems that was used for the first time in \cite{spi} for the standard symmetric exclusion process (see e.g. \cite{kur,jansen2014notion,Sturm} for  surveys on the topic). Duality relations allow to connect two Markov processes via a duality function; such function is an observable   of both processes, whose expectation satisfies a specific relation.
We speak of self-duality if the two Markov processes are two copies of the same process. The usefulness of (self-)duality  is in  the fact that it allows to study the system with a large number of particles in terms of the system initialized with a finite number of particles. For example, the study of $n$ dual particles can give information on the $n$-points correlation function of the original process. Unfortunately self-duality is a  property  not always easy to reveal.

The duality function for the standard asymmetric exclusion process, ASEP$(q)$ (case $\theta=1$), and its link to quantum algebras and spin chains was first revealed in \cite{Schutz1,Schutz2}. This discovery  immediately found a vast number of    applications, allowing to find  for instance, combined with Bethe ansatz  techniques,   current fluctuations  \cite{IS} and  properties of the transition probabilities \cite{imamura1997note}.   Among other important applications of self-duality and algebraic approach for ASEP, we mention the key role played  in the study  of shocks. We mention e.g. \cite{belitsky2002diffusion} for an analysis of microscopic shock dynamics, \cite{belitsky2016self} for shocks in multispecies ASEP  and \cite{schutz2016duality} for the study of the  process conditioned to low current. The self-duality function of ASEP   is not given by a trivial product of 1-site duality functions (as in the symmetric case) but has a nested-product structure similar to the one exhibited by the  G\"artner transform  {\cite{Gartner}}.
Thanks to this structure, it  has played an important role in the proof of  convergence to the 
  KPZ equation, in the case of weak asymmetry (see e.g. \cite{bertinigiacomin,BCT,cor,corwinshentsai,imamura2011kpz}).

The partial exclusion process  in its symmetric version SEP$(\alpha)$ appeared for the first time in  
\cite{caputo2005energy} where the authors introduced it as a   particle system version of the XXX-quantum-spin-chain, with  spin higher than 1/2. 
Then the process, together with its attractive counterpart, SIP$(\alpha)$, was systematically    studied in 
\cite{giardina2007duality,kur,giardina2010correlation} were self-duality functions are found and used to prove correlation inequalities.
These processes  are  not integrable (i.e. not treatable via Bethe ansatz techniques) but   self-duality makes them  amenable  to some  analytic treatment   (see e.g. \cite{carinci2020exact}).

The asymmetric processes ASEP$(\alpha,\theta)$ and   ASIP$(\alpha,\theta)$ were finally introduced in \cite{CGRS,CGRS1} where self-duality properties are proved. These are due to the algebraic structure of the generator that is constructed passing through the
$(\alpha + 1)$-dimensional representation of a quantum Hamiltonian with $\mathcal{U}_q(\mathfrak{sl}_2)$ invariance. 
The self-duality function has again a nested-product structure, defining, in a sense,  a generalized version of the  G\"artner transform  {\cite{Gartner}}, that 
allows to  compute the  q-exponential moments of the current for suitable initial conditions.
In the last few years, several steps forward  have been done in the effort  of finding suitable multispecies versions of ASEP$(\alpha,\theta)$ showing duality properties, see e.g.  \cite{belitsky2016self,chen2018integrable,kuan2016stochastic,Kwan,kuan2019stochastic,kuan2020two}.

Most of the duality results concerning this class of processes  are {\em triangular}, i.e. are non-zero only if the dual configuration is a subset of the original process configuration. We refer to duality functions of this type also as {\em classical duality functions}. 
Orthogonal polynomial duality functions  are, on the other hand, a very  recent discovery and were found, up to date, only for symmetric processes (SEP$(\alpha)$,  SIP$(\alpha)$ and IRW) in a series of  papers \cite{CFGGR,FG,FGG,RS}.
 The duality functions for these processes are  products of univariate orthogonal polynomials, where the orthogonality is with respect to the reversible measures of the process itself.  Knowing the expectations of orthogonal polynomial  duality functions is equivalent to having all moments.  The possibility to decompose polynomial functions in $L^2(\mu)$, where $\mu$ is the reversible measure of the process,  in terms of orthogonal duality polynomials, is then a crucial property that has many repercussions in the study of macroscopic fields emerging as scaling limits of the particle system. See e.g. the work \cite{mario} for an application of orthogonal duality polynomials  for symmetric models in the study of a  generalized version of  the Boltzmann-Gibbs principle. Moreover, in two recent papers \cite{ayala2020higher,chen2020higher} orthogonal  polynomials are at the base of the definition of the so-called higher-order  fields for which the hydrodynamic limit and fluctuations are derived via duality techniques for SEP$(\alpha)$, SIP$(\alpha)$ and IRW. Finally, in a recent work \cite{FRS} orthogonal duality  results for this class of symmetric models have been extended to the non-equilibrium context, allowing to derive several properties of $n$-point correlation functions in the non-equilibrium steady state.

The families of orthogonal polynomials dualities for these processes were found for the first time in  \cite{FG} by explicit computations  relying  on the hypergeometric structure of the polynomials. The same dualities were found  in \cite{RS} via generating functions, while  an algebraic approach  is followed in \cite{wol} and \cite{CFGGR}, relying, respectively, on  the use of unitary intertwiners and unitary symmetries. In \cite{CFGGR}  yet another approach to (orthogonal) duality is described, based on scalar products of classical duality functions.

\noindent
In this  paper we use this latter approach to  extend the results obtained in \cite{CFGGR} to the case of asymmetric processes. Differently from \cite{CFGGR}, the $q$-orthogonal duality functions for asymmetric processes are not yet known in the literature. We show that well-known families of $q-$hypergeometric orthogonal polynomials, the $q-$Krawtchouk polynomials (for exclusion processes) and $q-$Meixner polynomials (for inclusion processes), occur as $1$-site duality functions for corresponding stochastic models. The $q$-orthogonal duality functions  show again a nested-product structure, as the   classical ones found in \cite{CGRS,CGRS1}, but, differently from the latter, they do not have a triangular form.
We prove that the $q-$polynomials are orthogonal with respect to the reversible measures of our models, which, in turn, have a  non-homogeneous product structure.
The nested product structure and the orthogonality relations of our duality functions are very similar to the multivariate $q-$Krawtchouk and $q-$Meixner polynomials introduced in \cite{GasRah07}, but it seems that (except for the 1-variable case) they are not the same functions.

We conjecture that the orthogonal self-duality polynomials complete the picture of nested-product duality functions for ASEP($q,\theta$) and ASIP($q,\theta$),  summing up to the classical or {\em triangular} ones, already known for these processes from \cite{CGRS,CGRS1}.
The  strategy followed in \cite{CGRS,CGRS1}  to construct the so-called classical dualities  relies on an algebraic approach  based on the study of the symmetries of the generator. This can be written, indeed, in terms of
 the Casimir operator  of the quantized enveloping algebras ${\mathcal{U}}_q(\mathfrak{su}(2))$ and  ${\mathcal{U}}_q(\mathfrak{su}(1,1))$.
The same approach was used in \cite{Kwan} for the study of duality for a multi-species version of the asymmetric exclusion process,  exploiting the link with a higher rank quantum algebra.
In the last part of the paper we will follow this algebraic approach to write (in terms of  elements of  ${\mathcal{U}}_q(\mathfrak{su}(2))$) the symmetries of the generator  yielding the $q-$polynomial dualities obtained via the scalar-product method.

\subsection{Organization of the paper}
The rest of the paper is organized as follows. In Section \ref{asipabep} we introduce the two asymmetric models of interest and their corresponding reversible measures. The dynamics takes place on a finite lattice and it is fully described by their infinitesimal generators. In particular, see Section \ref{sigma} for a unified and comprehensive notation.
In Section \ref{mainres} we recall the concept of duality for Markov processes and then exhibit the main results of this work via two theorems, Theorem \ref{teoasep} for the asymmetric exclusion and Theorem \ref{teoasip} for the asymmetric inclusion. Here the families of $q-$orthogonal polynomials, that are self-duality functions for our processes, are displayed. Besides this, we also single out those symmetries which are uniquely associated to our $q-$orthogonal polynomials. In Section \ref{sec4} we show that having a duality relation  satisfying also an orthogonal relation considerably simplifies the computation of quantities of interest, such as the $q^{-2}$ exponential moments of the current and their space time correlations. 
The rest of the paper is devoted to the proof of our main results. In Section \ref{proofofresults} we show how to obtain functions which are biorthogonal and self-dual from construction. This is done using our general Theorem \ref{prop:orthogonality}, which invokes the scalar product of classical self-duality functions. Once a biorthogonal relation is proved we show, in Section \ref{proofs} for exclusion and in Section \ref{ASIP} for inclusion, that we can easily establish an orthogonality relation by an explicit computation of the (bi)orthogonal self-duality function.
In Section \ref{Sim} we explain how we find the unique symmetries which can be used to construct our $q-$orthogonal self-duality function starting from the trivial ones. This is based on the algebraic approach used in \cite{CGRS}-\cite{CGRS1} and so Sections \ref{quantum-algebra} and \ref{proc} are inspired  by those papers in which the Markov generator is linked to the Casimir element of the algebra. In Section \ref{symm} we identify the symmetries which generate our $q-$orthogonal self-duality functions. Finally, in order to make some computations more readable, we created an Appendix, Section \ref{appendix}, where we give definitions and well-known identities regarding $q-$numbers and $q-$hypergeometric functions.

\section{The models}\label{asipabep}

In this paper we will study models of interacting particles moving on a finite lattice $\la_L=\{1,\ldots,L\}$, $L\in \N$, $L\ge 2$, with closed boundary conditions and an asymmetric interaction. 
We denote by $x=\{x_i\}_{i\in \la_L}$ (or $n=\{n_i\}_{i\in \la_L}$) a  particle  configuration where $x_i$ (resp $n_i$) is the number of particles at site $i\in \la_L$.  We call $\Omega_L=S^{L}$ the state space, where $S\subseteq \N$ is the set where the occupancy numbers $x_i$ take values. For $x\in \Omega_L$ and $i,\ell\in \la_L$ such that $x_i>0$, we denote by $x^{i,\ell}$ the configuration obtained from $x$ by removing one particle from site $i$ and putting it at site $\ell$. \\

\noindent
In this paper we will consider, in particular, two different processes: the ASEP($q,\theta$) Asymmetric Exclusion Process and  the ASIP($q,\theta$) Asymmetric Inclusion Process.
These processes share some algebraic properties even though they have  a very different behavior.
In order to define  the processes and their main properties we need to introduce some notations.

 \subsection{Notation} \label{ssec:notation}
\subsubsection*{The $q-$numbers}
Throughout the paper we fix $q\in(0,1)$ and, for $a\in \R$,  define the  $q-$numbers as follows:
\begin{equation}
\label{q-num}
 \{a\}_{q}:= \frac{1-q^{a}}{1-q}\qquad \text{and} \qquad \{a\}_{q^{-1}}:= \frac{1-q^{-a}}{1-q^{-1}}.
\end{equation}
Moreover we define
\be\label{[a]_q}
[a]_q:=\frac{q^a-q^{-a}}{q-q^{-1}} = q^{1-a} \, \{a\}_{q^2}.
\ee
Notice that, for $q\to1$,  both $[a]_q$ and $\{a\}_{q^{\pm 1}}$ converge to $a$.
Finally we define  the $q-$factorial, for $n \in \mathbb N$, given by
\be
[n]_q!:=[n]_q \cdot [n-1]_q \cdot \dots \cdot [1]_q \; \quad \text{for } n\ge 1, \qquad \text{and } \quad [0]_q!:=1.
\ee
For $\theta,m\in \N$ we define the $q-$binomial coefficient by
\begin{equation}\label{coefbin1}
\binom{\theta}{m}_q:=
\frac{[\theta]_q!}{[m]_q! [\theta-m]_q!}\; \cdot \mathbf 1_{\theta\ge m},
\end{equation}
and, for $m \in \mathbb N$ and  $\theta \in (0,\infty)$,
\begin{equation} \label{coefbin2}
\binom{m+\theta-1}{m}_q:=
\frac{[\theta+m-1]_q\cdot [\theta+m-2]_q \cdot \ldots \cdot [\theta]_q}{[m]_q!}.
\end{equation}

\subsubsection*{The $q-$Pochhammer symbol}
For $a\in \R$ and $m\in \N$ the $q-$Pochhammer symbol, or $q-$shifted factorial, $(a;q)_m$ is defined by
\be\label{Poch}
(a;q)_{m} = (1-a)(1-aq^1)\cdots (1-aq^{m-1})\;, \quad \text{for} \quad m\ge 1 \quad \text{and} \quad (a;q)_0=1 
\ee
and furthermore,
\be\label{Poch1}
(a;q)_\infty:= \prod_{k=0}^{\infty} \left( 1- aq^{k}\right) \;.
\ee
Most of the $q-$Pochhammer symbols we need in this paper depend on $q^2$ instead of $q$. To simplify notation we omit the dependence on $q$, i.e.~we write
\[
(a)_m := (a;q^2)_m \qquad \text{for} \ m \in \N \cup \{\infty\}.
\]
In light of the above, we can rewrite the $q-$factorial and the $q-$binomial coefficient in terms of the $q-$Pochhammer symbol:
\begin{equation} \label{qbinom1}
\begin{split}
[n]_q! 
= (q^{-1}-q)^{-n} q^{-\frac12n(n+1)} (q^2)_n,
\end{split}
\end{equation}
so that
\begin{equation} \label{qbinom2}
\binom{\theta}{m}_q  
= q^{-m(\theta-m)}  \frac{ (q^2)_\theta }{(q^2)_m (q^2)_{\theta-m}} \cdot \mathbf 1_{\theta\ge m}= (-1)^m q^{m(\theta+1)} \frac{ (q^{-2\theta})_m}{(q^2)_m}.
\end{equation}
Similarly,
\begin{equation} \label{qbinom3}
\binom{m+\theta-1}{m}_q = q^{-m(\theta-1)} \frac{ (q^{2\theta})_m }{(q^2)_m}.
\end{equation}
\subsection{Particle-mass functions.}\label{functionN}
For $x\in \Omega_L$, $i\in \Lambda_L$, we introduce the functions $N_i^\pm(x)$ denoting the  number of particles in the configuration $x$ at the right, respectively left, of the site $i$:
\[
N_i^+(x):=\sum_{m=i}^L x_m \qquad \text{and} \qquad N^-_i(x):=\sum_{m=1}^i x_m,
\]
with the convention that $N_{L+1}^+(x)=N^-_0(x)=0$. Moreover we denote by $N(x)$ the total number of particles in the configuration $x$:
\beq
N(x):=\sum_{m=1}^L x_m \;. \nn
\eeq
Notice that $N_L^-(x)=N^+_1(x)=N(x)$ and that these mass functions satisfy the following
 change of summation formula:
\be\label{N2}
\sum_{i=1}^{L}x_{i} N_{i-1}^-(n) = \sum_{i=1}^{L}n_{i} N^+_{i+1}(x),
\ee
moreover the following  identity holds true and will be used throughout the paper:
\be\label{N1}
\sum_{i=1}^L x_i[2N^-_{i-1}(n) + n_i] +\sum_{i=1}^L n_i[2N^-_{i-1}(x) + x_i]=  2N(x)\cdot N(n).
\ee

\subsection{The ASEP($q,\theta$)}
In the generalized Asymmetric Exclusion Process particles jump  with a repulsive interaction and each site can host at most $\theta$ particles, where $\theta$ is now a parameter taking values in $\N$. Hence, in this case  $S=\{0,1,\ldots, \theta\}$ and $\Omega_L=S^{L}$. In the usual asymmetric simple exclusion process each site can either be empty or host one particle, while here each site can accommodate up to $\theta$ particles. Hence, by setting $\theta$ equal to $1$ we recover the hard-core exclusion. The infinitesimal generator is presented in the following definition.

\bd[ASEP($q,\theta$)]
 The \textup{ASEP}$(q,\theta)$ with closed boundary conditions is defined
as the Markov process on $\Omega_L$ with generator  $\caL^{\text{\tiny{ASEP}}}=\caL^{\text{\tiny{ASEP}}}_{(L)}$
defined on functions $f:\Omega_L\to\R$ by
\[
[{\cal L}^{\textup{\tiny{ASEP}}}f](x):=\sum_{i=1}^{{L-1}} [{\cal L}^{\textup{\tiny{ASEP}}}_{i,i+1}f](x) \qquad \text{with}
\]
\beq
[{\cal L}^{\textup{\tiny{ASEP}}}_{i,i+1}f](x) &
 := & q^{-2\theta +1} \{x_i\}_{q^2} \{\theta-x_{i+1}\}_{q^{2}} [f(x^{i,i+1}) - f(x)] \nn\\
&+ &  q^{2\theta -1} \{x_{i+1}\}_{q^{-2}} \{\theta-x_{i}\}_{q^{-2}}[f(x^{i+1,i}) - f(x)]. \nn
\eeq
\ed

\subsubsection*{Reversible  signed measures}
From Theorem 3.1 of \cite{CGRS} we know that the ASEP($q,\theta$) on $\la_L$ with closed boundary conditions
admits a family, labeled by $\alpha \in \R\setminus \{0\}$, of reversible product, non-homogeneous signed measures $\mu_\alpha^{\text{\tiny{ASEP}}}$ given by
\be
\label{stat-measasep}
{\mu_\alpha^{\textup{\tiny{ASEP}}}}(x) = \prod_{i=1}^L \alpha^{x_i} \,{\binom{\theta}{x_i}_q} \cdot  q^{- 2\theta  ix_i}  
\ee
for $i \in \la_L$. For positive values of $\alpha$, \eqref{stat-measasep} can be interpreted, after renormalization,  as a probability measure. Here the normalizing constant is $\prod_{i=1}^L Z_{i,\alpha}^{\text{\tiny{ASEP}}}$, with
\[
Z_{i,\alpha}^{\text{\tiny{ASEP}}} = \sum_{m=0}^{\theta}  {\binom{\theta}{m}_q} \cdot  \alpha^m q^{- 2\theta  i m}=(-\alpha q^{1-\theta(2i+1)})_{\theta},\qquad \text{for }\quad \alpha \in (0, \infty)
\]
where the identity follows from the $q-$binomial Theorem \eqref{newton}. In order to make sense of the constant $\alpha$ labelling the measure, one may e.g. compute the $q$-exponential moment w.r. to the normalized measure $\bar\mu_\alpha:= \mu_\alpha/Z_{\alpha}$:
\be
\mathbb E_{\bar \mu_\alpha}\left[q^{2x_i}\right]=\frac{(-\alpha q^2 q^{1-\theta(2i+1)})_{\theta}}{(-\alpha q^{1-\theta(2i+1)})_{\theta}}=\frac{1+\alpha q^{1-2\theta i+\theta}}{1+\alpha q^{1-2\theta i-\theta}}
\ee  
where we used the identity (1.8.11) in \cite{Koekoek}.

\subsection{The ASIP($q,\theta$) }
The Asymmetric Inclusion Process is a model in which  particles jump  with an attractive interaction. The parameter $\theta>0$ tunes the intensity of the interaction, the higher the attractiveness the smaller the $\theta$. Each site of the lattice $\la_L$ can host an arbitrary number of particles, thus, in this case we have $S=\N$ and then $\Omega_L=\N^{L}$.
We introduce the process by giving its generator.
\bd[ASIP($q,\theta$)]
\label{defASIP}
\noindent
The \textup{ASIP}$(q,\theta)$ with closed boundary conditions is defined
as the Markov process on $\Omega_L$ with generator $\caL^{\text{\tiny{ASIP}}}=\caL^{\text{\tiny{ASIP}}}_{(L)}$
defined on functions $f:\Omega_L\to\R$ by
\be\label{gen}
[{\cal L}^{\textup{\tiny{ASIP}}}f](x):=\sum_{i=1}^{{L-1}} [{\cal L}^{\textup{\tiny{ASIP}}}_{i,i+1}f](x) \qquad \text{with}\nn
\ee
\beq
[{\cal L}^{\textup{\tiny{ASIP}}}_{i,i+1}f](x) &
 := & q^{2\theta -1} \{x_i\}_{q^2} \{\theta +x_{i+1}\}_{q^{-2}} [f(x^{i,i+1}) - f(x)] \nn\\
&+ &  q^{-2\theta +1} \{x_{i+1}\}_{q^{-2}} \{\theta +x_{i}\}_{q^{2}}[f(x^{i+1,i}) - f(x)].
\eeq
\ed
\noindent
Since in finite volume we always start with finitely many particles, and the total particle number is conserved, the process is automatically well defined as a finite state space continuous time Markov chain.

\subsubsection*{Reversible  signed measures}
It is proved in Theorem 2.1 of \cite{CGRS1} that the ASIP($q,\theta$) on $\Lambda_L$ with closed boundary conditions
admits a family labeled by $\alpha\in \R\setminus \{0\}$ of reversible product non-homogeneous signed measures $\mu_\alpha^{\text{\tiny{ASIP}}}$  given by
\be
\label{stat-measasip}
{\mu_\alpha^{\text{\tiny{ASIP}}}}(x) =\prod_{i=1}^L\alpha^{x_i} \,{\binom{x_i+\theta -1}{x_i}_q} \cdot  q^{2\theta ix_i},  
\ee
for $x \in \N^L$. Restricting to positive values of the parameter $\alpha$, this can be turned to  a probability measure after renormalization that is possible only under the further restriction $\alpha <q^{-(\theta +1)}$.   In order to normalize it we should divide by
 the   constant $\prod_{i=1}^L Z_{i,\alpha}^{\text{\tiny{ASIP}}}$, with
\be \label{Z}
Z_{i,\alpha}^{\text{\tiny{ASIP}}}= \sum_{m=0}^{+\infty}  {\binom{m+\theta -1}{m}_q} \cdot  \alpha^m q^{2\theta i m} = \frac{(\alpha q^{2\theta i+\theta +1})_{\infty}}{(\alpha q^{2\theta i-\theta +1})_{\infty}}, \qquad \text{for }\quad \alpha \in (0, q^{-(\theta +1)})
\ee
where the latter identity follows from the $q-$binomial Theorem, \cite[(II.3)]{gasper2004basic}.
However, to keep notation light we work with the non-normalized measure. Also in this case one can easily compute the $q$-exponential moment w.r. to $\bar\mu_\alpha:= \mu_\alpha/Z_{\alpha}$:
\be
\mathbb E_{\bar \mu_\alpha}\left[q^{2x_i}\right]= \frac{(\alpha q^2 q^{2\theta i+\theta +1})_{\infty}}{(\alpha q^{2\theta i+\theta +1})_{\infty}}\cdot  \frac{(\alpha q^{2\theta i-\theta +1})_{\infty}}{(\alpha  q^2 q^{2\theta i-\theta +1})_{\infty}}=\frac{1-\alpha q^{1+2\theta i-\theta}}{1-\alpha q^{1+2\theta i+\theta}}
\ee  
where, for the second identity we used eq.(1.8.8) in \cite{Koekoek}.

\subsection{General case}\label{sigma}
In order to simplify the notation it is convenient to introduce  a parameter $\sigma$ taking values in $\{-1,+1\}$, distinguishing between the two cases: $\sigma=+1$ corresponding to the inclusion process and $\sigma=-1$ corresponding to the exclusion process. In what follows, if needed, we will omit the superscripts ASIP or ASEP and simply denote by $\caL$ the generator of one of the processes, meaning
\be\label{LL}
\caL_\sigma=
\left\{
\begin{array}{ll}
\caL^{\text{\tiny{ASEP($q,\theta$)}}} & \text{for } \sigma=-1\\
\caL^{\text{\tiny{ASIP($q,\theta$)}}} & \text{for } \sigma=+1.
\end{array}
\right.
\ee
where the parameter $\theta$ takes values in  $\N$ for $\sigma=-1$ and in $(0,\infty)$ for $\sigma=1$.
Particles occupation numbers take values in
\be
S_{\sigma,\theta}:=
\left\{
\begin{array}{ll}
\{0,1, \ldots, \theta\}& \text{for }\sigma=-1,\\
\N & \text{for }\sigma=+1,
\end{array}
\right.
\ee
and the state space of the process is $\Omega_L:=S_{\sigma,\theta}^L$.
 We can then write the generator (for the bond $i,i+1 \in \la_L$) in the general form:
\beq
[{\cal L}_{i,i+1}f](x)
& := &  q^{\sigma(2\theta-1)} \{x_i\}_{q^2} \{\theta+\sigma x_{i+1}\}_{q^{-2\sigma}} [f(x^{i,i+1}) - f(x)] \nn\\
&+ &   q^{-\sigma(2\theta-1)} \{x_{i+1}\}_{q^{-2}} \{\theta +\sigma x_{i}\}_{q^{2\sigma}}[f(x^{i+1,i}) - f(x)]. \nn
\eeq
Then, defining the function
\beq\label{Psi}
\Psi_{q,\sigma}(\theta,m):= (\sigma q)^m \; q^{-\sigma\theta m} \; \frac{(q^{2\sigma\theta})_m}{(q^2)_m}=
\left\{
\begin{array}{ll}
\binom{\theta}{m}_q & \text{for } \sigma=-1\\
&\\
\binom{m+\theta-1}{m}_q & \text{for } \sigma=+1
\end{array}
\right.
\eeq
(see \eqref{qbinom2}-\eqref{qbinom3}) the reversible  signed measure \eqref{stat-measasep}-\eqref{stat-measasip} can be rewritten in a unique expression as follows
\be
\label{SM}
\mu_{\alpha,\sigma}(x) = \prod_{i=1}^L\Psi_{\sigma}(\theta,x_i)\cdot  \alpha^{x_i} \, q^{2\sigma\theta  ix_i}, \qquad \text{for} \quad \alpha\in \R\setminus \{0\}
\ee
for $x\in\Omega_L=S_{\sigma,\theta}^L$.
\vskip.2cm
\noindent
  We define a modified version $\omega_{\alpha,\sigma}$ of \eqref{SM} that will appear in the statement of the main results in Section \ref{mainres}.
  This new signed measure differs from \eqref{SM} only through  multiplication  by a function of the total number of particles $N(x)$:
\beq\label{w}
\omega_{\alpha,\sigma}(x)=\frac{\mu_{\alpha,\sigma}(x)}{{\cal Z}_{\alpha,\sigma}}\cdot q^{N(x)(N(x)-1)} \cdot  (-\sigma \alpha q^{1+2N(x) +\sigma\theta(2 L+1)})_\infty
\eeq
where ${\cal Z}_{\alpha,\sigma}$ is a constant.
We remark that, as the processes conserve the total number of particles,   detailed balance condition is preserved under this operation, then  $\omega_{\alpha,\sigma}$ is  again a reversible signed measure for the processes. In order to interpret it as a probability measure we have to restrict to the case $\alpha>0$.  This condition is sufficient for the case $\sigma=+1$, while, for $\sigma=-1$ we have to impose the further condition $\alpha<q^{-1+(2L+1)\theta}$ in order to assure the positivity of the infinite $q-$shifted factorials. Under these conditions and choosing
\be
{\cal Z}_{\alpha,\sigma}:=\sum_{x}{\mu_{\alpha,\sigma}(x)}\cdot q^{N(x)(N(x)-1)} \cdot  (-\sigma \alpha q^{1+2N(x) +\sigma\theta(2 L+1)})_\infty.
\ee
$\omega_{\alpha,\sigma}$ is a reversible probability measure for the corresponding process:
\be\label{W}
\begin{array}{lll}
\omega^{\text{\tiny{ASEP($q,\theta$)}}}:=\omega_{\alpha,-1},& \text{for }  \alpha\in(0,q^{-1+(2L+1)\theta})\\
&&\\
\omega^{\text{\tiny{ASIP($q,\theta$)}}}:= \omega_{\alpha,+1},& \text{for }  \alpha \in (0, q^{-1-\theta}).\\
\end{array}
\ee
Finally we define the function:
\be\label{tw}
g_{\alpha,\sigma}(x):= \frac{q^{2N(x)(N(x)-1)}} {{\cal Z}^2_{\alpha,\sigma}}\cdot  (-\sigma \alpha q^{1+2N(x) +\sigma\theta(2 L+1)})_\infty\cdot(-\sigma\alpha q^{1-2N(x)+\sigma\theta})_\infty 
\ee
that will also appear in the statement of the main results.


\section{Main Results} \label{mainres}

The main result of this paper is the proof of self-duality properties for the processes
introduced in the previous section via $q-$hypergeometric orthogonal polynomials. For each process we show the existence of a self-duality function, $D$ and another one, $\widetilde D$, that is the same modulo multiplication by a function of the total number of particles and the size of the lattice. Such duality functions can be written in terms of the  $q-$Krawtchouk polynomials (respectively $q-$Meixner polynomials) for the ASEP($q,\theta$) (respectively for the  ASIP($q,\theta$)). $D$ and  $\widetilde D$  satisfy a biorthogonality relation if one considers the scalar product with respect to the (one site) reversible measures. However, the biorthogonal relation can easily be stated as an orthogonal relation by performing the change of measure of equation \eqref{w} and the consequently change of norm in equation  \eqref{w}.
We start by recalling below the definition of duality.
\bd
\label{standard}
Let  $\{X_t\}_{t\ge0}$,  $\{\widehat{X}_t\}_{t\ge0}$ be two  Markov processes with state spaces  $\Omega$ and $\widehat{\Omega}$ and $D: \Omega\times \widehat{\Omega}\to\R$ a measurable function. The processes  $\{X_t\}_{t\ge0}$,  $\{\widehat{X}_t\}_{t\ge0}$ are said to be \textbf{dual} with respect to $D$  if
\be\label{standarddualityrelation1}
\E_x \big[D(X_t, \widehat{x})\big]=\widehat{\mathbb{E}}_{\widehat{x}} \big[D(x, \widehat{X}_t)\big]\;
\ee
for all $x\in\Omega, \  \widehat{x}\in \hat{\Omega}$ and $t>0$. Here $\E_x $ denotes the expectation with respect to the
 law of the process $\{X_t\}_{t\ge0}$  started at $x$, while $\widehat{\mathbb{E}}_{\widehat{x}} $
denotes  expectation with respect to the law of the process $\{\widehat{X}_t\}_{t\ge0}$
initialized at $\widehat{x}$. If $\{\widehat{X}_t\}_{t\ge0}$  is a copy of $\{X_t\}_{t\ge0}$, we say that the process $\{X_t\}_{t\ge0}$ is \textbf{self-dual}.
\ed

\subsection*{Orthogonal polynomial dualities for ASEP($q,\theta$)}
In this section we display the orthogonal duality function for  ASEP($q,\theta$), namely the $q-$Krawtchouk polynomials, for which we will use the following notation
\begin{equation} \label{def:q-Krawtchoukbis}
K_n(q^{-x};p,c;q):=  \rphis{2}{1}{q^{-x},q^{-n}}{q^{-c}}{q,p{q^{n+1}}},
\end{equation}
where $_{2}\phi_{1}$ is the  $q-$hypergeometric function, $c \in \N$ and $n,x \in \lbrace 0,\ldots, c \rbrace$, see Section \ref{Kraw} of the Appendix for the orthogonality relations.
The following theorem states that nested products of $q-$Krawtchouk polynomials form a family of self-duality functions for ASEP($q,\theta$).
\bt \label{teoasep}
The \textup{ASEP}($q,\theta$) on $\Lambda_L$ is  self-dual  with self-duality functions:
\beq\label{Dasep}
 &&D_\alpha^{\textup{\tiny{ASEP($q,\theta$)}}}(x,n) :=  \prod_{i =1}^L  K_{n_i}(q^{-2x_i};p_{i,\alpha}(x,n),\theta ;q^2), \qquad  \alpha\in(0,q^{-1+(2L+1)\theta}),\nn\\
&& p_{i,\alpha}(x,n):=\frac 1 {\alpha}\; q^{-2(N_{i-1}^-(x)-N^+_{i+1}(n))+\theta(2i-1)-1}
\eeq
 satisfying the following orthogonality relation
\be\label{BiOrtasep}
\langle D_\alpha^{\textup{\tiny{ASEP($q,\theta$)}}}(\cdot , x), D_{\alpha}^{\textup{\tiny{ASEP($q,\theta$)}}}(\cdot , n)\rangle_{\omega_\alpha^{\textup{\tiny{ASEP($q,\theta$)}}}}=\dfrac{\delta_{x,n}}{ \omega_\alpha^{\textup{\tiny{ASEP($q,\theta$)}}}(x)}\cdot g_{\alpha,-1}(x)
\ee
with $\omega_\alpha^{\textup{\tiny{ASEP($q,\theta$)}}}$ and $ g_{\alpha,-1}$
 defined  in \eqref{W}-\eqref{tw}.
\et
\begin{remark} \*
For $L=1$ this gives the orthogonality relations for $q-$Krawtchouk polynomials as stated in Section \ref{Kraw}, so we have obtained a family of multivariate orthogonal polynomials generalizing the $q-$Krawtchouk polynomials.
Note that the restriction  $\alpha\in(0,q^{-1+(2L+1)\theta})$ has been imposed in order to have a scalar product  \eqref{BiOrtasep} w.r. to a (positive) reversible measure, that can be eventually turned in a probability measure, after renormalization.
Note also that this is the condition required  in order to have the conditions \eqref{condK} satisfied, indeed, for $\alpha\in(0,q^{-1+(2L+1)\theta})$,
\be
\theta \in \N \qquad  \text{and} \quad q^{2\theta} \cdot p_{i,\alpha}(x,n)>1 \qquad  \text{for all} \quad  x,n\in \Omega_L, \: i \in \Lambda_L \;.
\ee
If we neglect this condition Theorem \ref{teoasep} holds still true  with the only difference that we can not guarantee the positivity of $\omega_\alpha$.
\end{remark}

\subsection*{Orthogonal polynomial dualities for ASIP($q,\theta$)}

In the same spirit of the previous section we now introduce the orthogonal duality relation for ASIP($q,\theta$). In this case we have that the self-duality functions are a nested product of $q-$Meixner polynomials
\beq \label{q-Meixnerbis}
M_n(q^{-x};b,c;q):=  \rphis{2}{1}{q^{-x},q^{-n}}{bq}{q,-\frac{q^{n+1}}{c}}, \qquad \text{for} \quad x,n \in \N,
\eeq
see Section \ref{Meix} in the Appendix for more details and orthogonality relations. The following theorem is the analogue of the previous one; it says that a family of nested $q-$Meixner polynomials are self-duality functions for ASIP($q,\theta$).
\bt \label{teoasip}
The \textup{ASIP}($q,\theta$) on $\Lambda_L$ is  self-dual  with  self-duality functions
\beq\label{D}
&& D_\alpha^{\textup{\tiny{ASIP($q,\theta$)}}}(x,n) :=  \prod_{i =1}^L M_{n_i}(q^{-2x_i};q^{2(\theta -1)}, c_{i,\alpha}(x,n);q^2), \qquad \alpha>0\nn\\
&&
 c_{i,\alpha}(x,n):=\alpha q^{2(N_{i-1}^-(x)-N^+_{i+1}(n))+\theta (2i-1)+1},
\eeq
for all  $\alpha\in (0,q^{-1-\theta})$, satisfying the following orthogonality relations
\be\label{BiOrtasip}
\langle D_\alpha^{\textup{\tiny{ASIP($q,\theta$)}}}(\cdot , x), D_{\alpha}^{\textup{\tiny{ASIP($q,\theta$)}}}(\cdot , n)\rangle_{\omega_{\alpha}^{\textup{\tiny{ASIP($q,\theta$)}}}}=\dfrac{\delta_{x,n}}{\omega_{\alpha}^{\textup{\tiny{ASIP($q,\theta$)}}}(x)}\cdot  g_{\alpha,1}(x)
\ee
with $\omega_\alpha^{\textup{\tiny{ASIP($q,\theta$)}}}$
and $ g_{\alpha,1}$
 defined  in \eqref{W}-\eqref{tw}.
\et
\noindent
\begin{remark}
For $L=1$ this gives the orthogonality relations for $q-$Meixner polynomials as stated in Section \ref{Meix}. The orthogonal polynomials have a similar structure as the multivariate $q-$Meixner polynomials introduced in \cite{GasRah07}, but they do not seem to be the same functions.  Notice that the conditions \eqref{condM}  are satisfied, indeed
\beq
q^{2\theta}\in [0,1) \qquad \text{and} \qquad c_{i,\alpha}(x,n)>0 \qquad \text{for all}\quad x,n \in \Omega_L, \: i\in \Lambda_L
\eeq
if $\alpha>0$. As in the case of ASEP, the condition $\alpha>0$ is only needed in order to assure the positivity of the measure $\omega_\alpha$.
\end{remark}

\subsection*{Orthogonal self-dualities and symmetries}
Whenever the process is reversible
it has now been established that there is a one-to-one correspondence between self-duality (in the context of Markov process with countable state space) and symmetries of the Markov generator. The idea is the following: the reversible measure of our processes provides a \textit{trivial} self-duality function (which is the inverse of the reversible measure itself). Then the action of a symmetry of the model on this trivial self-duality gives rise to a non-trivial self-duality function, see \cite{CFGGR} (Section 2.3) or \cite{kur}.
For this reason it is natural to ask which are the symmetries associated to our orthogonal self-dualities. In the context of orthogonal polynomials, we know that the symmetries must preserve the norm of the trivial self-duality function, i.e. the symmetry is unitary. Recall that a unitary operator on the space $L^{2}(\mu)$ is such that its adjoint corresponds to its inverse.
In order to recover the unitary symmetries associated to the orthogonal dualities  we first {\em normalize} the self-duality functions  \eqref{Dasep} and \eqref{D}. At this aim we define
\be \label{normalizedselfduality}
\D_{\alpha,\sigma}(x,n):=D_{\alpha,\sigma}(x,n)\cdot  q^{\binom{N(x)}{2}-\binom{N(n)}{2}}
\cdot \sqrt{\frac{ (-\sigma \alpha q^{1+2N(x) +\sigma\theta(2 L+1)})_\infty
}{(-\sigma\alpha q^{1-2N(n)+\sigma\theta})_\infty }}
\ee
with
\be\label{DDD}
D_{\alpha,\sigma}=\left\{
\begin{array}{ll}
D_\alpha^{\text{\tiny{ASEP($q,\theta$)}}}& \text{for }\sigma=-1\\
D_\alpha^{\text{\tiny{ASIP($q,\theta$)}}}& \text{for }\sigma=+1\\
\end{array}
\right.
\ee
Notice that the functions $\D_{\alpha,\sigma}$, with $\sigma=\pm 1$ are equal to the old dualities modulo multiplication by a factor that only depends on the total number of particles in both configurations. As a consequence the functions $\D_{\alpha,\sigma}$ are themselves a family of self-duality functions as the dynamics conserves the mass (see e.g. Lemma 3 of \cite{CFGGR}).
After this renormalization the orthogonality relations read
\be\label{Ort}
\langle \D_{\alpha,\sigma}(\cdot , x), \D_{\alpha,\sigma}(\cdot , n)\rangle_{\mu_{\alpha,\sigma}}=\dfrac{\delta_{x,n}}{ \mu_{\alpha,\sigma}(x)} \;.
\ee
We can reinterpret now the orthogonal self-duality function $\D_{\alpha,\sigma}$ as the result of the action of a unitary symmetry $\s_{\alpha,\sigma}$ of the generator on the trivial duality function constructed as the inverse of the reversible measure i.e. $\dfrac{\delta_{x,n}}{ \mu_{\alpha,\sigma}(x)}$. More precisely, as a consequence of the above, defining
\beq\label{S}
\s_{\alpha,\sigma}(x,n):=\D_{\alpha,\sigma}(x,n)\cdot \mu_{\alpha,\sigma}(n)
\eeq
we have the following result {for $\sigma=-1$}.
\bp
For $\alpha >0$, we have that $\s_{\alpha,-1}$ is a symmetry of the generator $\caL_{-1}$ defined in \eqref{LL}, i.e. $[\s_{\alpha,-1}, \caL_{-1}]=0$. Moreover it is a unitary operator in $L^2(\mu_\alpha)$ i.e. $\s_{\alpha,-1}^*\s_{\alpha,-1}=\s_{\alpha,-1}\s_{\alpha,-1}^*=I$.
\ep
\br
For $\sigma=+1$, both $\caL_{+1}$ and $\s_{\alpha,+1}$ are unbounded operators on $L^2(\mu_\alpha)$. If we choose the set of finitely supported functions in $L^2(\mu_\alpha)$ as a dense domain for both operators, they commute on this domain. We do not have unitarity of $\s_{\alpha,+1}$. The relation $\s_{\alpha,+1}^*\s_{\alpha,+1}=I$ holds because this is equivalent to the orthogonality relations for $D_\alpha$. But the relation $\s_{\alpha,+1}\s_{\alpha,+1}^*=I$ does not automatically follow from this as in the finite dimensional setting. In fact, the latter relation is not valid, which is a consequence of the fact that the $q$-Meixner polynomials do not form a complete orthogonal set in their weighted $L^2$-space.
In the last part of Appendix \ref{Meix} we address this issue.
\er
}
\noindent
In Section \ref{Sim} we will give an expression of the symmetry $\s_{\alpha, -1}$ in terms of the generators of the quantized enveloping algebra {${\mathcal{U}}_q(\mathfrak{sl}_2)$, in the spirit of \cite{CGRS}-\cite{CGRS1}. In order to do this we will pass through the construction of the generator of the processes from a quantum Hamiltonian, that is in turn  built from the coproduct of the Casimir operator of   {${\mathcal{U}}_q(\mathfrak{sl}_2)$.
\vskip.4cm
\noindent
\br[Symmetric case]
Performing the limiting relation as $q\rightarrow 1$ then the families of hypergeometric $q-$orthogonal polynomials converge to the classical hypergeometric orthogonal polynomials found in \cite{CFGGR}, which are families of self-duality functions for the corresponding symmetric interacting particle systems. In this limit the duality functions lose their nested-product structure and become ordinary product functions.
\er

\br[Space of self-duality functions]
A question that naturally arises regards the space of self-duality for our asymmetric models. In the symmetric setting, it has been established in \cite{RS} that, up to constant factors, the only possible product self-duality functions are the trivial, the classical and the orthogonal ones. We conjecture that in the asymmetric case one can make a similar characterization under the assumption of a nested product form. However  a rigorous proof could be an interesting subject for a future work. 
\er

\section{Duality moments and correlations}\label{sec4}
In this section we show how the duality relation can be used to compute suitable  moments and correlations of the process. 

\vskip.2cm
\noindent
In this section we will use the generic notation   $\{x(t), \, t\ge 0\}$ and $\{n(t), \, t\ge 0\}$  to denote two copies of the process with generator $\cal L_\sigma $ defined in \eqref{LL} with state space $\Omega_L=S_{\sigma,\theta}^L$. This process corresponds to  ASEP$(q,\theta)$ for $\sigma=-1$ and to ASIP$(q,\theta)$ for $\sigma=1$.
  We denote by
$\mathbb P_x$, resp. $\mathbb E_x$, the probability measure, resp. expectation, of one copy of the process conditioned to the initial value $x(0)=x$.
The duality relation \eqref{standarddualityrelation1} reads as
\be\label{DuRel}
\mathbb E_x \left[D_{\alpha,\sigma}(x(t),n)\right]=\mathbb E_n \left[D_{\alpha,\sigma}(x,n(t))\right],
\ee
that holds true for the duality function $D_{\alpha,\sigma}$ defined in \eqref{DDD}-\eqref{Dasep}-\eqref{D}.
Thinking now the original process $\{x(t), \, t\ge 0\}$  as a process with a high number of particles and the dual one $\{n(t), \, t\ge 0\}$ as a process with a few particles, and calling 
$n$-th duality moment at time $t$ the expectation $\mathbb E_x \left[D_{\alpha,\sigma}(x(t),n)\right]$, relation \eqref{DuRel} tells us that
it is possible to compute the duality moments of the original process in terms of the dynamics of   $\|n\|$ dual particles.
The added value of the orthogonality relation  lies in the possibility of computing the stationary two-times correlations. 

\vskip.4cm
\noindent
{\bf Two-times correlations.}\label{TWO}
For the case $\sigma=-1$, the duality functions $\{D_{\alpha,-1}(\cdot,n), \; n \in \Omega_L\}$ form a basis of $L^2(\omega_{\alpha,-1})$, where $\omega_{\alpha,-1}$ is the reversible probability measure of ASEP$(q,\theta)$ defined in \eqref{W}. It follows that any function $f\in L^2(\omega_{\alpha,-1})$  can be expanded in terms of the duality polynomials $D_{\alpha,-1}(\cdot,n)$,   $n \in \Omega_L$. The same does not hold true for $\sigma=1$ since $\{D_{\alpha,1}(\cdot,n), \; n \in \Omega_L\}$
does not form a basis of $L^2(\omega_{\alpha,-1})$. As a consequence, for the latter case only for functions in the span of $\{D_{\alpha,1}(\cdot,n), \; n \in \Omega_L\}$ one can get a similar expansion. 
\vskip.4cm
\noindent
In general we have that, for any fixed
\be
\alpha \in (0,\alpha_\sigma^{\text{max}}), \qquad \text{with} \qquad \alpha_\sigma^{\text{max}}=
\left\{
\begin{array}{ll}
q^{-1+(2L+1)\theta}& \text{for }\sigma=-1\\
q^{-1-\theta}& \text{for }\sigma=+1\\
\end{array}
\right.
\ee
and for any
 $f\in \text{Span}\{D_{\alpha,\sigma}(\cdot,n), \; n \in \Omega_L\}$, we have
\be
f=\sum_{n\in \Omega_L} C_f(n)\cdot D_{\alpha,\sigma}(\cdot,n)\label{expa}
\ee
where, from \eqref{BiOrtasep}-\eqref{BiOrtasip}, $C_f$ is given by:
\be\label{Cf}
C_f(n)=  \frac{\omega_{\alpha,\sigma}(n)}{g_{\alpha,\sigma}(n)}\cdot \langle f, D_{\alpha,\sigma}(\cdot,n) \rangle_{\omega_{\alpha,\sigma}}.
\ee
with $g_{\alpha,\sigma}$ the function defined in \eqref{tw}.  This orthogonal expansion substantially simplifies  the computation of the two-times correlations as shown in the following Theorem. 
\vskip.3cm
\bt\label{Teo:Corr}
Fix  $\alpha\in(0,\alpha_\sigma^{\text{max}})$ and let $f,h\in \text{\it Span}\{D_{\alpha,\sigma}(\cdot,n), \; n \in \Omega_L\}$, then, for all $0\le s\le t$, 
\be\label{Corr}
\mathbb E_{\omega_{\alpha,\sigma}}\left[ h(x(s))\cdot f(x(t))\right]= \sum_{n,m\in \Omega_L} \frac{\omega_{\alpha,\sigma}(n)}{g_{\alpha,\sigma}(n)} \cdot  C_h(m)C_f(n) \cdot  \mathbb P_n(n(t-s)=m).
\ee
\et
\bpr
In this proof we will omit the subscript $\sigma$. We have
\beq\label{here}
&&\mathbb E_{\omega_\alpha}\left[ h(x(s))\cdot f(x(t))\right]=\nn\\
&&= \sum_{n,m\in \Omega_L}\frac{\omega_\alpha(m)}{g_\alpha(m)}\cdot\frac{\omega_\alpha(n)}{g_\alpha(n)}\cdot C_h(m)C_f(n) \cdot \mathbb E_{\omega_\alpha}\left[D_\alpha(x(s),m) D_\alpha(x(t),n) \right]
\eeq
where
\beq
&& \mathbb E_{\omega_\alpha}\left[D_\alpha(x(s),m) D_\alpha(x(t),n) \right]=\sum_{x\in \Omega_L} \omega_\alpha(x) \cdot \mathbb E_{x}\left[D_\alpha(x(s),m) D_\alpha(x(t),n) \right]\nn\\
&&=\sum_{x\in \Omega_L} \omega_\alpha(x) \cdot \mathbb E_{x}\left[D_\alpha(x(s),m) \cdot \mathbb E_{x(s)}[D_\alpha(x(t-s),n)] \right]\nn\\
&&=\sum_{x\in \Omega_L} \omega_\alpha(x) \cdot D_\alpha(x,m) \cdot \mathbb E_{x}[D_\alpha(x(t-s),n)] \nn\\
&&=\sum_{x\in \Omega_L} \omega_\alpha(x) \cdot D_\alpha(x,m)  \cdot \mathbb E_{n}[D_\alpha(x,n(t-s))] \nn \\
&&=\mathbb E_{n}\left[\sum_{x\in \Omega_L} \omega_\alpha(x) \cdot D_\alpha(x,m) \cdot D_\alpha(x,n(t-s))\right] \nn \\
&&=\frac{g_\alpha(m)}{\omega_\alpha(m)}\cdot  \mathbb P_n(n(t-s)=m) \nn 
\eeq
where the second identity follows from the Markov property, the third one from the stationarity of $\omega_\alpha$, the forth one from duality, and the last one from \eqref{BiOrtasep}-\eqref{BiOrtasip}.
Then, using \eqref{here}, we get \eqref{Corr}.
\epr
\vskip.3cm
\noindent
A similar result holds true for symmetric system, see for instance  Section 3.3 of \cite{mario}  where an expansion of the type of \eqref{expa}  has been used to derive  an higher-order version of the Boltzmann-Gibbs principle, for a system of independent random walkers. An analogous identity holds true also for SEP$(\theta)$ and SIP$(\theta)$.
In, general, for this whole class of symmetric models admitting orthogonal polynomial dualities,  the symmetric version of Theorem  \ref{Teo:Corr} allows to compute the two-times correlations of the duality observables (see e.g. equation (16) in \cite{ayala2020higher}).   This identity has been a crucial ingredient in the definition and study of the so-called ``higher-order'' density fields \cite{ayala2020higher,chen2020higher} for which  a full characterization of the hydrodynamic and fluctuations scaling limits has been achieved thanks to orthogonal dualities. 
\vskip.5cm
\noindent
{\bf $q^{-2}$-exponential moments.}
In order to apply  Theorem \ref{Teo:Corr} we need to detect the functions $f\in \text{Span}\{D_{\alpha,\sigma}(\cdot,n), \; n \in \Omega_L\}$ for which the coefficients $C_f(\cdot)$, or, equivalently, the projections $\langle f, D_{\alpha,\sigma}(\cdot,n) \rangle_{\omega_{\alpha,\sigma}}$ can be easily computed.  The most natural example of such functions is $f:=q^{-2N^{-}_{i}(\cdot)}$. Indeed one can easily check by direct computation that, choosing $n=\delta_i$ for some $i\in \Lambda_L$, one has:
\begin{equation} \label{m=1}
D_\alpha(x,\delta_i)  =1- \dfrac{q^{-2\sigma \theta i +1}}{\left( q^{\theta} - q^{-\theta}\right) \alpha } \left[ q^{-2N^{-}_{i-1}(x)} - q^{-2N^{-}_{i}(x)} \right] 
\end{equation}
and, as a consequence,
\be\label{first}
q^{-2N^{-}_{i}(x)}
=1+\sigma  \alpha q^{\sigma\theta-1}\left(1 - q^{2\sigma\theta i}\right)  +  \alpha q^{-1}\left( q^{\theta} - q^{-\theta}\right) \sum_{\ell=1}^i {  {q^{2\sigma \theta \ell }}}\cdot  D_\alpha(x, \delta_\ell)
\ee
from which it follows that 
\beq
&& C_f({\bf 0})=1+\sigma  \alpha q^{\sigma\theta-1}\left(1 - q^{2\sigma\theta i}\right)  , \qquad 
C_f(\delta_\ell)= \alpha q^{-1}\left( q^{\theta} - q^{-\theta}\right) q^{2\sigma \theta \ell}\cdot \mathbf 1_{\lbrace \ell\le i \rbrace},\nn\\
&& \qquad C_f(n)=0 \quad \text{for all }n: \: \|n\|>1, \nn
\eeq
Then, using Theorem  \ref{Teo:Corr}, we obtain the following formula for the space-time correlations of the $q^{-2}$-exponential moments of $N_i^-(x)$:
\beq\label{covv}
&&\mathbb E_{\omega_{\alpha,\sigma}}\left[q^{-2N^{-}_{i}(x(s))}\cdot q^{-2N^{-}_{j}(x(t))}\right]=\nn\\
&&= \frac{\omega_{\alpha,\sigma}({\bf 0})}{g_{\alpha,\sigma}({\bf 0})} \,
 \(1+\sigma  \alpha q^{\sigma\theta-1}\left(1 - q^{2\sigma\theta i}\right)  \) \(1+\sigma  \alpha q^{\sigma\theta-1}\left(1 - q^{2\sigma\theta j}\right) \)\nn \\
&&+ \alpha^2 q^{-2}\left( q^{\theta} - q^{-\theta}\right)^2\sum_{\ell=1}^j \sum_{\kappa=1}^i \frac{\omega_{\alpha,\sigma}(\delta_\ell)}{g_{\alpha,\sigma}(\delta_\ell)} \cdot q^{2\sigma \theta(\ell+\kappa)  } \cdot  p_{t-s}\(\ell,\kappa\)\nn \\
\eeq
where we use the notation $p_t(\kappa,\ell)$ for the one-dual particle transition probability from site $\kappa$ to site $\ell$ at time $t$.
%
%
%
 \noindent
The interest of  the correlations in \eqref{covv} lies in the link between the function $N^-_i(x)$ and the total current at site $i$ as shown in the following definition and proposition (see also section 6.2 of \cite{CGRS1} for a more detailed treatment of the subject).
 \vskip.3cm
 \bd[Current] Let $\{x(s), \; s \ge 0\}$ be a c\`adl\`ag trajectory on $\Omega_L$, then
the total integrated current $J_i(t)$ in the time interval $[0,t]$ is defined as the net number of particles crossing the bond $(i-1,i)$
in the left direction. Namely, let $(t_k)_{k\in\mathbb{N}}$ be the sequence of the process jump times. Then
\be
J_i(t) = \sum_{k: t_k \in [0,t]}  \(\mathbf{1}_{\{x(t_k) = x(t_k^-)^{i,i-1}\}} - \mathbf{1}_{\{x(t_k) = x(t_k^-)^{i-1,i}\}}\),\qquad i\in \Lambda_L
\ee
\ed
\bl
The total integrated current of a c\`adl\`ag trajectory $\{x(s), \; 0\le s\le t\}$ with $x(0)=x$ is given by
\be \label{J}
J_i(t)= N^-_i(x(t))-N^-_i(x), \qquad i\in \Lambda_L.
\ee
\el
\bpr
\eqref{J} immediately follows from the definition of $J_i(t)$.
\epr
\vskip.4cm
\noindent
As a consequence of \eqref{J} we have that the duality relation gives information about the $q^{-2}$-exponential moments and correlations of the currents. 
The convenient  use of duality for  the computation of $q$-exponential moments of the current has already emerged in \cite{CGRS1}. Here  the authors pointed out the link between these moments and the triangular self-duality function for the case of ASIP$(q,\theta)$. Thanks to this link an explicit formula was found for the expectation of the observable $q^{2J_i(t)}$ when the process is initialized from a deterministic configuration $\eta$. The added value of the orthogonal polynomial duality functions lies in the possibility to compute the two-times correlations of the type \eqref{covv} by a relatively simple computation. 
\vskip.3cm
\noindent
The form of $q$- Krawtchouk  and $q$-Meixner polynomials suggests  that the duality relation \eqref{DuRel} is amenable to provide informations about all the $q^{-2}$-exponential moments of the variables $N^-_i(x)$, i.e.
 \be\label{q-exp}
 \mathbb E_x \left[\prod_{k=1}^{K} q^{-2m_kN^-_{i_k}(x(t))} \right], \qquad K\in \N, \quad m_k\in \N, \quad 1\le i_1<\cdots<i_K\le L
 \ee
 and we expect that formulas of the type of \eqref{covv} for the stationary space-time correlations can be obtained for any polynomials in the variable $q^{-2N^{-}_{i}(\cdot)}$, $i\in \Lambda_L$ by direct computation of the scalar product in \eqref{Cf}. Computation of moments of this type will be object of future investigation.

\section{Construction of the orthogonal dualities} \label{proofofresults}
From the analysis  developed in \cite{CGRS1} and \cite{CGRS}   the processes ASIP($q,\theta$) and ASEP($q,\theta$) are known to be self-dual with respect to self-duality functions that have a nested-product structure and  a {\em triangular} form, with triangular  meaning that they have support contained in the set of couples $(x,n)\in \Omega_L^2$ such that $n_i\le x_i$ for all $i\in \Lambda_L$.
In this section  we start   from these triangular duality-functions to construct new duality functions satisfying  suitable orthogonality relations.

\subsection{Triangular dualities}  \label{ssec:triangular dualities}

The functions
\be
\label{tria1asip}
D_{\lambda}^{\text{tr}}(x, n )= {\prod_{i=1}^L}  \frac{\binom{x_i}{n_i}_q}{\Psi_{q,\sigma}(\theta,n_i)}\,
\cdot \,
q^{x_i\left(2N^-_{i-1}(n) +n_i\right)- 2\sigma\theta  i n_i} \, \lambda^{n_i}
\ee
and
\be
\label{tria2asip}
 \widehat D_{\lambda}^{\text{tr}}( x, n )=
 {\prod_{i=1}^L} \frac{\binom{x_i}{n_i}_q}{\Psi_{q,\sigma}(\theta,n_i)}\,
\cdot \,
q^{-n_i\left(2N^-_{i-1}(x) +x_i\right)- 2\sigma\theta  i n_i} \, \lambda^{n_i}
\ee
with $\Psi$ the $q$-binomial coefficient given in \eqref{Psi}, are self-duality functions for the ASEP($q,\theta$) (for $\sigma=-1$), resp.~for ASIP($q,\theta$) (for $\sigma=+1$).
For the proof of the duality relation we refer to  \cite[Theorem 5.1]{CGRS1} for the case  $\sigma=+1$ and to  \cite[Theorem 3.2]{CGRS} for $\sigma=-1$. We notice moreover that these two functions are the same function modulo a multiplicative quantity that only depends on the total number of particles $N(x)$ and $N(n)$. {More precisely, using \eqref{N1}}, we have that
\beq
\widehat D_{\lambda}^{\text{tr}}( x, n )= q^{-2N(n)N(x)} \cdot  D_{\lambda}^{\text{tr}}( x, n ).
\eeq

\subsection{From triangular to orthogonal dualities} \label{generalresult}
The following theorem, {which is a slight generalization of \cite[Proposition 4.5]{CFGGR},} will be the key ingredient needed to produce biorthogonal duality functions from the triangular ones.

\begin{theorem}[Biorthogonal self-duality functions via scalar product] \label{prop:orthogonality}
Let $X$ be a Markov process on a countable state space $\Omega$, with generator $L$. Let $\mu_1$ and $\mu_2$ be two reversible measures for $X$,
and  $d_1$, $d_2$, $\tilde d_1$ and  $\tilde d_2$ be four self-duality functions for $X$. Suppose that
\begin{equation}\label{d12}
\langle d_1(x,\cdot), {d}_2(\cdot,n) \rangle_{\mu_1}=\dfrac{\delta_{x,n}}{\mu_2(n)} \;\qquad\text{and}\qquad\langle \tilde{d}_2(x,\cdot), \tilde{d}_1(\cdot,n) \rangle_{\mu_2}=\dfrac{\delta_{x,n}}{\mu_1(n)}
\end{equation}
for $x,n \in \Omega$. Here $\langle \, \cdot\, , \, \cdot\, \rangle_{\mu_i}$ denotes the scalar product corresponding to the measure $\mu_i$. Then the functions $D, \widetilde D : \Omega \times \Omega \to \R$ given by
\be \label{ordu}
{ D}(x,n) := \langle \tilde{d}_1(x,\cdot),  d_1(n,\cdot)\rangle_{ \mu_1} \qquad \widetilde{D}(x,n) := \langle \tilde{d}_2(\cdot,x) , d_2(\cdot,n) \rangle_{\mu_1}
\ee
are self-duality functions for $X$. Moreover, they satisfy the biorthogonality relations
\be \label{or}
\langle D(\cdot,m), \widetilde{D} (\cdot,n)  \rangle_{\mu_2} = \frac{ \delta_{m,n}}{\mu_2(n)}, \qquad m,n \in \Omega.
\ee
In particular, if $\widetilde D = c_1(x)c_2(n) D$, where $c_1$ (resp. $c_2$) is a positive function of the total number of particles (resp. dual particles), then
equation \eqref{or} becomes an orthogonality relation for $D$ with respect to the weight $c_1(x)\mu_2(x)$ and with squared norm $ \frac{ 1}{c_2(n) \mu_2(n)}$.

\end{theorem}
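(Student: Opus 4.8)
The plan is to prove the three assertions of Theorem \ref{prop:orthogonality}---that $D$ and $\widetilde D$ are self-dual, that they satisfy the biorthogonality \eqref{or}, and the concluding specialization---by direct manipulation of the scalar products in \eqref{ordu}, using nothing beyond reversibility of the measures and self-duality of the four building blocks $d_1,d_2,\tilde d_1,\tilde d_2$. Throughout I write the generator as a matrix $L(x,y)$ and recall that self-duality of a function $d$ reads $\sum_{x'} L(x,x')\,d(x',n)=\sum_{n'}L(n,n')\,d(x,n')$ (the generator acting on the first argument equals the generator acting on the second), while reversibility of $\mu_1$ reads $\mu_1(y)L(y,y')=\mu_1(y')L(y',y)$.

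First I would show that $D$ is self-dual. Starting from $\sum_{x'}L(x,x')\,D(x',n)$ with $D(x,n)=\sum_y \mu_1(y)\,\tilde d_1(x,y)\,d_1(n,y)$, I apply self-duality of $\tilde d_1$ to move the generator off the variable $x'$ and onto the summation index $y$; I then use reversibility of $\mu_1$ to transfer it from $\tilde d_1$ onto $d_1$; and finally a second application of self-duality, this time of $d_1$, moves it onto the $n$-slot, leaving exactly $\sum_{n'}L(n,n')\,D(x,n')$. The verification for $\widetilde D(x,n)=\sum_y\mu_1(y)\,\tilde d_2(y,x)\,d_2(y,n)$ is structurally identical, with the shared summation now running over the first argument and self-duality invoked for $\tilde d_2$ and $d_2$; only reversibility of $\mu_1$ is needed, not of $\mu_2$.

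Next I would prove the biorthogonality \eqref{or} by a single algebraic double-sum. Expanding $D(x,m)$ and $\widetilde D(x,n)$ into their defining sums over auxiliary indices $y$ and $z$ and computing $\sum_x \mu_2(x)\,D(x,m)\,\widetilde D(x,n)$, the inner sum over $x$ is precisely $\langle \tilde d_2(z,\cdot),\tilde d_1(\cdot,y)\rangle_{\mu_2}$, which collapses to $\delta_{z,y}/\mu_1(y)$ by the second hypothesis in \eqref{d12}. Setting $z=y$ reduces the whole expression to $\langle d_1(m,\cdot),d_2(\cdot,n)\rangle_{\mu_1}$, which equals $\delta_{m,n}/\mu_2(n)$ by the first hypothesis in \eqref{d12}, giving \eqref{or}. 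The concluding statement is then immediate: inserting $\widetilde D(x,n)=c_1(x)c_2(n)D(x,n)$ into \eqref{or} and pulling $c_1(x)$ into the measure and $c_2(n)$ out of the scalar product turns it into $\sum_x c_1(x)\mu_2(x)\,D(x,m)\,D(x,n)=\delta_{m,n}/(c_2(n)\mu_2(n))$, i.e.\ orthogonality of $D$ with respect to the weight $c_1\,\mu_2$ and squared norm $1/(c_2(n)\mu_2(n))$.

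Since every step is short, the genuine difficulty is bookkeeping rather than depth: one must apply self-duality to the correct argument at each stage and insert reversibility of $\mu_1$ at exactly the right link in the chain for $D$ (and, with arguments swapped, for $\widetilde D$). The one point deserving explicit care is the interchange of sums in the inclusion case $\sigma=+1$, where $\Omega$ is infinite; there I would note that the manipulations are justified on a suitable dense domain (e.g.\ finitely supported functions) or by the absolute convergence guaranteed by the explicit product form of the measures and duality functions, so that no boundary terms arise.
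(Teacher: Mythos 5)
Your proposal is correct and takes essentially the same route as the paper: your biorthogonality computation (expand both scalar products, interchange the order of summation, collapse the inner sum over $x$ to $\delta_{y,z}/\mu_1(y)$ via the second hypothesis in \eqref{d12}, then apply the first hypothesis) is the paper's argument verbatim, including the caveat about justifying the interchange of sums. The only difference is that where the paper simply cites \cite[Proposition 4.1]{CFGGR} for the fact that scalar products of self-duality functions are again self-duality functions, you reprove that fact directly through the self-duality/reversibility chain (correctly observing that only reversibility of $\mu_1$ is needed there), which is precisely the content of the cited proposition rather than a genuinely different approach.
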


\bpr
Since scalar products of self-duality functions are self-duality function by \cite[Proposition 4.1]{CFGGR}, we have that both $D$ and $\widetilde{D}$ are self-duality functions. For the biorthogonality relation, assuming we can interchange the order of summation, we get
\begin{align*}
\langle D(\cdot,m), \widetilde{ D}(\cdot,n)\rangle_{\mu_2} &= \sum_{x} D(x,m) \widetilde{ D}(x,n) \mu_2(x) \\
& = \sum_{x} \left(\sum_y \tilde{d}_1(x,y)d_1(m,y)  \mu_1(y)\right) \left( \sum_z \tilde{d}_2(z,x)  d_2(z,n) \mu_1(z)\right)\mu_2(x)\\
& = \sum_{y,z} { d}_1(m,y) d_2(z,n)  \mu_1(y)\mu_1(z)\sum_x \tilde{d}_2(z,x)  \tilde{d}_1(x,y)\mu_2(x) \\
& = \sum_{y,z}  \mu_1(y)\mu_1(z) { d}_1(m,y) d_2(z,n) \, \frac{ \delta_{y,z}}{\mu_1(y)} \\
& = \sum_y { d}_1(m,y) d_2(y,n) \mu_1(y) = \frac{ \delta_{m,n}}{\mu_2(m)}.
\end{align*}
This proves the result.
\epr

\vskip.5cm
\noindent
In order  to apply this theorem to produce biorthogonal self-duality functions from the triangular ones we need to show that the triangular duality functions \eqref{tria1asip} and \eqref{tria2asip} satisfy the relations \eqref{d12}.  This property is the content of  proposition below.

\vskip.2cm
\noindent
Let $\mu_\alpha$, $\alpha\in\R\setminus\{0\}$ be the family of reversible signed measures defined in \eqref{SM}, then from now onward we will use the notation
 $\langle \, \cdot\, , \, \cdot\, \rangle_\alpha$ for the scalar product with respect to the reversible measure $\mu_\alpha$.

\bp\label{Prop1} Let $D_{\lambda}^{\textup{tr}}$ and $\widehat D_{\lambda}^{\textup{tr}}$ the functions defined in \eqref{tria1asip}-\eqref{tria2asip}, then, for all $\alpha,\beta\in \R\setminus\{0\}$  we have
 \beq
 &&\langle D_{1/{\alpha q}}^{\textup{tr}}(x,\cdot), \widehat D_{-q/\beta}^{\textup{tr}}(\cdot,n)\rangle_{-\alpha}    = \frac{\delta_{x,n}}{\mu_\beta(n)} \;.
 \eeq
\ep
\noindent
We will prove this result  in Section \ref{p5.1} only for ASEP($q,\theta$) as the proof for ASIP($q,\theta$) is similar.

\vskip.4cm
\noindent
Proposition \ref{Prop1} guarantees that the two conditions in \eqref{d12} are satisfied for the self duality functions
\[
d_{1}= D^{\text{tr}}_{1/\alpha q}, \quad \tilde d_{1}= \widehat  D^{\text{tr}}_{q/\alpha}, \quad
\quad d_{2}= \widehat D^{\text{tr}}_{-q/\beta}, \quad \tilde d_{2}= D^{\text{tr}}_{-1/\beta q} \]
by taking the scalar product with respect to the measures
\[
 \quad \mu_1=\mu_{-\alpha}, \quad  \mu_2=\mu_\beta.
\]
then, as a consequence of Theorem \ref{prop:orthogonality}, we can deduce that the functions
\beq
D_\alpha(x,n) &:=\langle \widehat D^{\text{tr}}_{q/\alpha}(x,\cdot), D^{\text{tr}}_{1/\alpha q}(n,\cdot)\rangle_{-\alpha},\label{qua} \\
\widetilde D_{\alpha,\beta}(x,n)&:=\langle   D^{\text{tr}}_{-1/\beta q}(\cdot , x),\widehat D^{\text{tr}}_{-q/\beta} (\cdot , n)\rangle_{-\alpha},\label{quaa}
\eeq
are again self-duality functions satisfying the following
 biorthogonality  relation:
 \be\label{ddDD}
\langle D_\alpha(\cdot , m),\widetilde D_{\alpha,\beta}(\cdot , n)\rangle_{\mu_\beta}=\dfrac{\delta_{n,m}}{\mu_{\beta}(n)}.
\ee

\vskip.4cm
\noindent
{\bf {\large Conclusion of the proof for ASEP($q,\theta$).}} The next step in the construction of the orthogonal dualities  is the computation of the  explicit expressions for the self-duality functions $D_\alpha$ and $\widetilde D_{\alpha,\beta}$ that have been implicitly defined in \eqref{qua}-\eqref{quaa}. This is the content of the next proposition  where the new duality functions are identified, for the case $\sigma=-1$,   in terms of $q-$Krawtchouk polynomials.

\bp \label{propasep}
Let $\sigma=-1$,  $\alpha,\beta\in \R\setminus \{0\}$, then the functions $D_\alpha(x,n)$ and $\widetilde D_{\alpha,\beta}(x,n)$  defined in \eqref{qua}-\eqref{quaa} are given by
\beq \label{Da}
&&D_\alpha(x,n) :=  \prod_{i =1}^L  K_{n_i}(q^{-2x_i};p_{i,\alpha}(x,n),\theta ;q^2), \nn\\
&& p_{i,\alpha}(x,n):=\frac 1\alpha \; q^{-2(N_{i-1}^-(x)-N^+_{i+1}(n))+\theta(2i-1)-1}
\eeq
and
\beq\label{Dt}
\begin{split}
\widetilde D_{\alpha,\beta}(x,n):=\frac{ (\alpha q^{1+2N(x) - 2\theta L-\theta})_\infty \; \;q^{N(x)(N(x)-1)} }{ (\alpha q^{1-2N(n) - \theta})_\infty \; \;q^{N(n)(N(n)-1)} }   \left(\dfrac{\alpha}{ \beta} \right) ^{N(x+n)}\cdot D_\alpha(x,n).
\end{split}
\eeq

\ep
\noindent
Proposition \ref{propasep} will be proved in Section \ref{p4.2}. The  function $D_\alpha$ emerging here is nothing else than the self-duality function $D_\alpha^{\text{\tiny{ASEP($q,\theta$)}}}$ defined in Theorem \ref{teoasep}.
 Whereas $\widetilde D_{\alpha,\beta}$ is another self-duality function differing from $ D_{\alpha}$ only via multiplication by a factor that depends only on the total number of particles in both configurations, $N(x)$ and $N(n)$. To conclude the proof of Theorem \ref{teoasep} it remains to turn the biorthogonality relation \eqref{dd} in an orthogonality relation for $D_\alpha$. This is possible by including the extra factor in \eqref{Dt} in the  measure with respect to which we take the scalar product. So, at this point Theorem \ref{teoasep} follows from Theorem \ref{prop:orthogonality}, \eqref{dd} and Proposition \ref{propasep} after choosing $\alpha=\beta$ and switching from the scalar product with respect to $\mu_\alpha$ to the scalar product with respect to $\omega_\alpha$ (defined in \eqref{w}).
\vskip.5cm
\noindent
{\bf {\large Conclusion of the proof for ASIP($q,\theta$).}} The strategy followed for the case $\sigma=-1$ does not completely work for $\sigma=1$.  In this case Theorem \ref{prop:orthogonality} can  only be partially applied. More precisely we have that
the scalar product \eqref{quaa} formally defining $\widetilde D_{\alpha,\beta}$ does not converge, as it gives rise, now, to an infinite sum.
Nevertheless we have that  the hypothesis \eqref{d12} are satisfied as Proposition \ref{Prop1} holds true also for $\sigma=1$ and the scalar product \eqref{qua} defining $D_\alpha$ converges. The explicit computation of this scalar product  gives rise to the multivariate $q-$Meixner polynomials $D_\alpha^{\text{\tiny{ASIP($q,\theta$)}}}$ defined in \eqref{D}. This is, due to Theorem \ref{prop:orthogonality} a self-duality function.  It remains to prove, a posteriori, an orthogonality relation that can be guessed exploiting   the formal similarities between ASIP and ASEP. The proof of this orthogonality relation will be the object of  Section \ref{ASIP}.
\vskip.5cm
\noindent
Before entering the details of our proofs, one may wonder if there is a link from the orthogonal dualities to the triangular ones. In the symmetric case this has been revealed in Remark 4.2 of \cite{RS} where the authors show that, after a proper normalization, as $\alpha \rightarrow 0$ the orthogonal dualities are precisely the triangular ones. 
A similar result holds true in the asymmetric context, however, the outcome of the limit is the triangular duality up to a factor that depends on the total number of (dual) particles, namely $q^{N(n)^2}\, \widehat D^{\mathrm{tr}}_1(x,n)$ or $q^{-N(x)^2}\, D^{\mathrm{tr}}_1(n,x)$  (depending on which variable we assume bigger).
The constant factor converges to $1$ as soon as $q\rightarrow 1$, see Remark \ref{frank} in the next Section.
}

\section{Proofs for ASEP$(q,\theta)$}\label{proofs}

In this section we will prove Theorem \ref{teoasep}.
In the proofs it will be convenient to write the triangular duality functions given in Section \ref{ssec:triangular dualities} as nested products of ``1-site duality functions". Let $\lambda,p,r \in \R\setminus\{0\}$. We define for $n,k\in S_{\sigma,\theta}$,
\[
\begin{split}
{d}_{\lambda}(n,k; p, r) &:=  \frac{\binom{n}{k}_q}{\Psi_{q,\sigma}(\theta,k)}\,\lambda^{k}\, q^{n k} \, p^{n} \, r^{k}\, \mathbf 1_{k \le n}, \\
\hat d_{\lambda}(n,k; p, r) &:=  \frac{\binom{n}{k}_q}{\Psi_{q,\sigma}(\theta,k)}\,\lambda^{k}\, q^{- n k} \, p^{-k} \, r^{k}\, \mathbf 1_{k \le n}.
\end{split}
\]
Then the triangular duality functions are given by
\be
\label{triangular duality product}
D_{\lambda}^{\text{tr}}(x, n) =
 \prod_{i =1}^L d_\lambda(x_{i}, n_{i}; p_{i}, r_{i}), \qquad \widehat D_{\lambda}^{\text{tr}}( x, n )=
 \prod_{i =1}^L \hat{d}_{\lambda}(x_{i}, n_{i}; \hat p_{i}, r_{i}),
\ee
where
\[
p_{i}=p_i(n)=q^{2N^{-}_{i-1}(n)}, \quad \hat p_i= \hat p_i(x) = q^{2N^-_{i-1}(x)}, \qquad  r_{i}=q^{-2i\sigma\theta}.
\]
Note that the \emph{nested} product structure comes only from the parameters $p_i$ and $\hat p_i$.

Furthermore, recall that for both processes we have families of reversible measures labelled by $\alpha$.

\subsection{Proof of Proposition \ref{Prop1} {for ASEP($q,\theta$)}}\label{p5.1}
In order to prove Proposition \ref{Prop1} we start by writing the scalar product with free parameters $\lambda_{1}$ for $D^{\text{tr}}$ and $\lambda_2$ for $\widehat D^{\text{tr}}$ and throughout the computation the right choice will become clear. We have
\begin{align} \label{inizio}
	\langle D_{\lambda_1}^{\text{tr}}(x,\cdot), \widehat D_{\lambda_2}^{\text{tr}}(\cdot,n)\rangle_{-\alpha} = \prod_{i=1}^{L} \sum_{y_i = n_i}^{x_i} d_{\lambda_1}(x_{i},y_i, p_i, r_i)  \hat d_{\lambda_2}(y_i,n_{i}; \hat p_i, r_i ) \mu_{-\alpha}(y_i),
\end{align}
where both  $p_i$ and $\hat p_i$ depend on $N^-_{i-1}(y)$, making the display above a nested product of sums. Since the sum over $y_i$ depends on $y_1,\ldots,y_{i-1}$ we first evaluate the sum over $y_L$, then the sum over $y_{L-1}$, and so on. Let us denote the sum over $y_i$ by $\Sigma_i(x_i,n_i;y)$, where $y=(y_1,\ldots,y_{i-1})$ (we suppress the dependence on $r_i, \lambda_1, \lambda_2$ and $\alpha$), then 
\beq
&\Sigma_i(x_i,n_i;y)&=\sum_{y_i = n_i}^{x_i} d_{\lambda_1}(x_{i},y_i, p_i, r_i)  \hat d_{\lambda_2}(y_i,n_{i}; \hat p_i, r_i ) \mu_{-\alpha}(y_i)\nn\\
&&=\sum_{y_i=n_i}^{x_i} \frac{\binom{x_i}{y_i}_q}{\binom{\theta}{y_i}_q}
q^{x_i\left[2 N^{-}_{i-1}(y) +y_i\right]+ 2\theta  i y_i} \, \lambda_1^{y_i}  \frac{\binom{y_i}{n_i}_q}{\binom{\theta}{n_i}_q} 
\, q^{-n_i\left[2N_{i-1}^{-}(y) +y_i\right]+ 2\theta  i n_i} \, \lambda_2^{n_i} \cdot \nn \\
&&\qquad \cdot {(-\alpha)^{y_i}} \,{\binom{\theta}{y_i}_q}  q^{-2\theta iy_i}  \nn \\
&&= {q^{2\theta  in_i} \lambda_2^{n_i}}  \sum_{y_i=n_i}^{x_i}  \frac{\binom{y_i}{n_i}_q \binom{x_i}{y_i}_q}{\binom{\theta}{n_i}_q}
q^{(x_i-n_i)\left[2N^{-}_{i-1}(y) +y_i\right]} \, \(-\alpha\lambda_1\)^{y_i}  \nn \\
&&= \frac{\binom{x_i}{n_i}_q}{\binom{\theta}{n_i}_q} {q^{2\theta in_i} \lambda_2^{ n_i}}
q^{(x_i-n_i)\left[2N^{-}_{i-1}(y)\right] }
\sum_{y_i=n_i}^{x_i} {\binom{x_i-n_i}{y_i-n_i}_q}
q^{(x_i-n_i)y_i}\,\(-\alpha\lambda_1\)^{y_i}, \nn
\eeq
where in the last equality we used the $q-$binomial identity \eqref{qbinid}.
Performing a change of variables in the summation and setting
\begin{equation*}
	C_i(x_i,n_i;y):= \frac{\binom{x_i}{n_i}_q}{\binom{\theta}{n_i}_q}\;{q^{2\theta in_i} \lambda_2^{ n_i}} (-\alpha\lambda_1 )^{n_i} q^{(x_i-n_i)(2N^{-}_{i-1}(y) +n_i)} \;,
\end{equation*}
we get
\begin{equation*}
	\Sigma_i(x_i,n_i;y) =   C_i(x_i,n_i;y)  \sum_{z=0}^{x_i-n_i} \binom{x_i-n_i}{z}_q \cdot
	(-\alpha\lambda_1)^{z}  q^{(x_i-n_i)z} \;.
\end{equation*}
Then the Newton formula in equation \eqref{newton} yields
\begin{equation*}
	\Sigma_i(x_i,n_i;y)=  C_i(x_i,n_i;y) (\alpha \lambda_1 q)_{x_i-n_i}\;.
\end{equation*}
First let us choose $\lambda_1=\frac {q^{-1}} {\alpha}$, then the product is non-zero only for $x_i=n_i$,
\begin{equation*}
	\Sigma_i(x_i,n_i;y)= C_i(n_i,n_i;y)\, \delta_{x_i,n_i} \;,
\end{equation*}
where it should be remarked that $C_i(n_i,n_i;y)$ is independent of $y=(y_1,\ldots,y_{i-1})$.
Next choosing $\lambda_2=-\frac q \beta$ we find
\begin{equation*}
	C_i(n_i,n_i)=\frac{q^{2\theta in_i}}{\binom{\theta}{n_i}_q}\, \(-\frac{\lambda_2}{q}\)^{n_i} = \frac 1 {\mu_\beta(n_i)} \;.
\end{equation*}
Using this in equation \eqref{inizio}, we get
\begin{equation*}
	\langle D_{1/\alpha q}^{\text{tr}}(x,\cdot), \widehat D_{-q/\beta}^{\text{tr}}(\cdot,n)\rangle_{-\alpha}=\prod_{i=1}^{L}   \frac{\delta_{x_i,n_i}}{\mu_\beta(n_i)}      = \frac{\delta_{x,n}}{\mu_\beta(n)} \;,
\end{equation*}
which concludes the proof of the proposition.
\qed

\subsection{Proof of Proposition \ref{propasep}}\label{p4.2}
The explicit expressions will follow from calculations involving $q-$binomials coefficients and $q-$hypergeometric functions. We start with the biorthogonality property.
\vskip.4cm
\noindent
\textbf{Calculation of $D$.}
We fix $x,n \in \Omega_L$, {$\alpha >0$}, and we evaluate
\[
D_\alpha(x,n)=\langle \widehat D^{\text{tr}}_{q/\alpha}(x,\cdot), D^{\text{tr}}_{1/\alpha q}(n,\cdot)\rangle_{-\alpha}.
\]
We make use of the product structure \eqref{triangular duality product} again. We start with a result for the 1-site duality functions.
\begin{lemma} \label{lem:scalar product 1-site}
For $p,\hat p\in \R\setminus\{0\}$, $m \in \Z$ and $s,t \in \{0,\ldots,\theta\}$,
\[
\begin{split}
\sum_{y =0}^{\theta} q^{2my} \,& \hat d_{q/\alpha}(s,y; \hat p,r_i) d_{1/\alpha q}(t,y; p, r_i) \mu_{-\alpha}(y)  = \\
& p^t
\rphis{2}{1}{q^{-2s},q^{-2t}}{q^{-2\theta }}{q^2, \frac{1}{\alpha \hat p} q^{1+ 2t - \theta+2i\theta +2m} }.
\end{split}
\]
\end{lemma}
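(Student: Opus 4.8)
The plan is a direct computation: substitute the explicit one-site expressions, simplify the binomial and power factors, and recognize the resulting terminating series as the claimed $_2\phi_1$. For $\sigma=-1$ one has $\Psi_{q,-1}(\theta,\cdot)=\binom{\theta}{\cdot}_q$, so the relevant one-site factor of the measure at site $i$ is $\mu_{-\alpha}(y)=\binom{\theta}{y}_q(-\alpha)^y q^{-2\theta i y}$, and $r_i=q^{2i\theta}$. Plugging the definitions of $\hat d_{q/\alpha}(s,y;\hat p,r_i)$ and $d_{1/\alpha q}(t,y;p,r_i)$ into the sum, the three binomial prefactors combine as
\[
\frac{\binom{s}{y}_q}{\binom{\theta}{y}_q}\cdot\frac{\binom{t}{y}_q}{\binom{\theta}{y}_q}\cdot\binom{\theta}{y}_q=\frac{\binom{s}{y}_q\binom{t}{y}_q}{\binom{\theta}{y}_q},
\]
while the two indicator functions $\mathbf 1_{y\le s}\,\mathbf 1_{y\le t}$ restrict the range to $0\le y\le\min(s,t)$.

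Next I would collect all scalar factors. The $\alpha$-powers give $(q/\alpha)^y(1/\alpha q)^y(-\alpha)^y=(-1)^y\alpha^{-y}$, the factor $p^t$ is pulled out of the sum, and the remaining powers of $q$ — arising from the prefactor $q^{2my}$, the factors $q^{-sy}$ and $q^{ty}$ in $\hat d$ and $d$, the two copies of $r_i^y=q^{2i\theta y}$, and the $q^{-2\theta i y}$ coming from the measure — add up to $q^{(t-s+2i\theta+2m)y}$. At this stage the left-hand side reads
\[
p^t\sum_{y=0}^{\min(s,t)}\frac{\binom{s}{y}_q\binom{t}{y}_q}{\binom{\theta}{y}_q}\left(\frac{-1}{\alpha\hat p}\right)^y q^{(t-s+2i\theta+2m)y}.
\]

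Finally I would pass to $q^2$-Pochhammer symbols. Using the second identity in \eqref{qbinom2}, namely $\binom{a}{y}_q=(-1)^y q^{y(a+1)}\frac{(q^{-2a})_y}{(q^2)_y}$ applied for $a\in\{s,t,\theta\}$, the quotient of binomials becomes $(-1)^y q^{y(s+t-\theta+1)}\frac{(q^{-2s})_y(q^{-2t})_y}{(q^2)_y(q^{-2\theta})_y}$. The sign cancels the $(-1)^y$ from the $\alpha$-powers, and the $q$-exponents merge into $q^{y(1+2t-\theta+2i\theta+2m)}$, so that the summand equals exactly $\frac{(q^{-2s})_y(q^{-2t})_y}{(q^{-2\theta})_y(q^2)_y}\,w^y$ with $w=\frac{1}{\alpha\hat p}q^{1+2t-\theta+2i\theta+2m}$. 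Since $(q^{-2s})_y$ and $(q^{-2t})_y$ vanish for $y>s$ and $y>t$ respectively, the upper limit may be freely extended to $\infty$, and the sum is by definition the terminating $_2\phi_1$ appearing in the statement, times the prefactor $p^t$.

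The only genuine difficulty here is bookkeeping of the $q$-exponents; there is no conceptual obstruction. I expect the main source of error to be the several separate contributions to the exponent of $q$ (in particular the two copies of $r_i$ against the single $q^{-2\theta i y}$ of the measure, and the $q^{y(s+t-\theta+1)}$ produced by converting the three binomials), together with the consistent use of the base-$q^2$ convention $(a)_m=(a;q^2)_m$. A single misplaced factor there would shift the argument $w$ of the $_2\phi_1$, so I would verify the coefficient of $y$ in the exponent twice before reading off the final hypergeometric form.
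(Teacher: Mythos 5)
Your proposal is correct and follows essentially the same route as the paper's proof: substitute the explicit one-site expressions and the one-site factor of $\mu_{-\alpha}$, cancel the binomial factors and collect the powers of $\alpha$ and $q$ to reach $p^t\sum_{y\le s\wedge t}\frac{\binom{s}{y}_q\binom{t}{y}_q}{\binom{\theta}{y}_q}\bigl(-\tfrac{1}{\alpha\hat p}\bigr)^y q^{y(t-s+2m+2i\theta)}$, and then apply the second identity in \eqref{qbinom2} to read off the terminating $_2\varphi_1$ with argument $\frac{1}{\alpha\hat p}q^{1+2t-\theta+2i\theta+2m}$. Your bookkeeping of the exponents (including the two copies of $r_i^y$ against the single $q^{-2\theta iy}$ of the measure, and the $(-1)^y$ cancellation) matches the paper's computation exactly, and the remark on extending the sum to infinity via the vanishing Pochhammer factors is the standard justification the paper leaves implicit.
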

\begin{proof}
Using the explicit expressions of the 1-site duality functions we find
\[
\begin{split}
\sum_{y =0}^{\theta}& q^{2my}\, \hat d_{q/\alpha}(s,y; \hat p,r_i) d_{1/\alpha q}(t,y; p, r_i) \mu_{-\alpha}(y) \\
& =\sum_{y \le s \wedge t }
 \frac{\binom{s}{y}_q  \binom{t}{y}_q}{\binom{\theta}{y}_q \binom{\theta}{y}_q}\,
q^{-sy +4i\theta y + ty +2my }   p^t \,   \left(   \dfrac{1}{ \alpha \alpha \hat p}\right) ^{y}
{(-\alpha)^{y}} \,{\binom{\theta}{y}_q} q^{-2i\theta y} \\
&=p^t \sum_{y \le s \wedge t } \dfrac{\binom{s}{y}_q  \binom{t}{y}_q }{\binom{\theta}{y}_q}
\left( -\dfrac{1}{ \alpha \hat p}\right) ^{y} q^{y\left(t -s +2m +2i\theta  \right) } \\
& = p^t \sum_{y \le s \wedge t} \frac{ (q^{-2s})_y (q^{-2t})_y }{(q^2)_y (q^{-2\theta })_y}  \left( \frac{1}{\alpha \hat p} q^{1+ 2t - \theta+2i\theta +2m} \right)^y \;,
\end{split}
\]
where the last equality is due to the $q-$binomial coefficient identity \eqref{qbinom2}. The result then follows from the definition of the $_2\varphi_1$-function.
\end{proof}

\noindent
We introduce auxiliary functions: for $i=1,\ldots,L$,
\[
A_i(y_1,\ldots,y_i) = \hat{ d}_{q/\alpha}(x_i,y_i; \hat p_i(x),r_i) d_{1/\alpha q}(n_i, y_i; p_i(y), r_i) \mu_{-\alpha}(y_i).
\]
From Lemma \ref{lem:scalar product 1-site} with
\[
s=x_i, \quad t=n_i, \quad p=p_i(y) = q^{2N_{i-1}^-(y)}, \quad \hat p = \hat p(x) = q^{2N_{i-1}^-(x)},\quad m = n_{i+1},
\]
we find the following identities.
\begin{lemma}  \label{lem:sum A}
Let $i \in \{1,\ldots,L\}$ and $y_1,\ldots,y_{i-1} \in \{0,\ldots,\theta\}$, then
\[
\sum_{y_i} A_i(y_1,\ldots,y_i) q^{2n_{i+1}N_{i}^-(y)} = \sum_{y_i} S_{i}(y_i;x,n) q^{2n_{i+1}N_{i}^-(y)+2n_{i}N_{i-1}^-(y)},
\]
where $n_{L+1}=0$, and
\[
S_i(y_i;x,n) = \frac{ (q^{-2x_i})_{y_i} (q^{-2n_i})_{y_i} }{(q^2)_{y_i} (q^{-2\theta })_{y_i} } \left(\frac{1}{\alpha} q^{1-2N_{i-1}^-(x)+2n_i-\theta+2i\theta }\right)^{y_i}.
\]
\end{lemma}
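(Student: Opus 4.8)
The plan is to reduce the sum over $y_i$ to the single-variable identity of Lemma \ref{lem:scalar product 1-site} and then to repackage the resulting $_2\phi_1$ into the form claimed for $S_i$. The one point to keep in mind throughout is that, since we sum only over $y_i$ with $y_1,\dots,y_{i-1}$ held fixed, both $\hat p_i(x)=q^{2N_{i-1}^-(x)}$ and $p_i(y)=q^{2N_{i-1}^-(y)}$ are constants with respect to the summation variable; in particular $\hat p_i$ involves $x$ and $p_i$ involves $y$, and this distinction must be tracked carefully.

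First I would split off the $y_i$-dependence of the extra weight. Since $N_i^-(y)=N_{i-1}^-(y)+y_i$, one has $q^{2n_{i+1}N_i^-(y)}=q^{2n_{i+1}N_{i-1}^-(y)}\,q^{2n_{i+1}y_i}$, and the first factor is independent of $y_i$, so it can be pulled out of the sum. What remains is $\sum_{y_i} A_i(y_1,\dots,y_i)\,q^{2n_{i+1}y_i}$, which is exactly the left-hand side of Lemma \ref{lem:scalar product 1-site} under the substitution $s=x_i$, $t=n_i$, $p=p_i(y)$, $\hat p=\hat p_i(x)$ and $m=n_{i+1}$.

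Next I would invoke that lemma. Its right-hand side is $p_i(y)^{n_i}$ times the terminating series $\rphis{2}{1}{q^{-2x_i},q^{-2n_i}}{q^{-2\theta}}{q^2,\frac{1}{\alpha\hat p_i(x)}q^{1+2n_i-\theta+2i\theta+2n_{i+1}}}$, which I would write out as the finite sum over $y_i$ of $\frac{(q^{-2x_i})_{y_i}(q^{-2n_i})_{y_i}}{(q^2)_{y_i}(q^{-2\theta})_{y_i}}$ times the $y_i$-th power of that argument; the $q$-Pochhammer numerators make the sum terminate automatically at $y_i=x_i\wedge n_i$. Substituting $\hat p_i(x)=q^{2N_{i-1}^-(x)}$ turns the base of the power into $\frac{1}{\alpha}q^{1-2N_{i-1}^-(x)+2n_i-\theta+2i\theta+2n_{i+1}}$, while the prefactor becomes $p_i(y)^{n_i}=q^{2n_i N_{i-1}^-(y)}$.

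It then remains to recognize the summand. Peeling the factor $q^{2n_{i+1}y_i}$ back out of the base reduces that base precisely to the argument appearing in $S_i(y_i;x,n)$, namely $\frac{1}{\alpha}q^{1-2N_{i-1}^-(x)+2n_i-\theta+2i\theta}$, so the summand becomes $S_i(y_i;x,n)\,q^{2n_{i+1}y_i}$. Reinstating the two $y_i$-independent exponential prefactors $q^{2n_{i+1}N_{i-1}^-(y)}$ (pulled out at the start) and $q^{2n_i N_{i-1}^-(y)}$ (the $p_i(y)^{n_i}$ term), and using $N_i^-(y)=N_{i-1}^-(y)+y_i$ once more to combine $q^{2n_{i+1}N_{i-1}^-(y)}q^{2n_{i+1}y_i}=q^{2n_{i+1}N_i^-(y)}$, yields $\sum_{y_i} S_i(y_i;x,n)\,q^{2n_{i+1}N_i^-(y)+2n_i N_{i-1}^-(y)}$, which is the claim. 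There is no genuine obstacle here: the entire content is Lemma \ref{lem:scalar product 1-site}, and the only care required is the bookkeeping of the nested exponents, above all keeping straight which of $p_i$, $\hat p_i$ depends on $y$ and which on $x$.
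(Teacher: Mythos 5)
Your proof is correct and follows exactly the paper's route: the paper obtains Lemma \ref{lem:sum A} by applying Lemma \ref{lem:scalar product 1-site} with precisely your substitutions $s=x_i$, $t=n_i$, $p=p_i(y)$, $\hat p=\hat p_i(x)$, $m=n_{i+1}$, leaving the exponent bookkeeping implicit. You have simply made explicit the steps the paper omits (splitting $N_i^-(y)=N_{i-1}^-(y)+y_i$, absorbing $p_i(y)^{n_i}$ into the exponent $2n_iN_{i-1}^-(y)$, and identifying the terminating $_2\varphi_1$ summand with $S_i$), all of which check out.
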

\noindent
Now we are ready to find an explicit expression for $D(x,n)$.
We have
\[
\begin{split}
D(x,n) &=\langle \widehat D^{\text{tr}}_{q/\alpha}(x,\cdot), D^{\text{tr}}_{1/\alpha q}(n,\cdot)\rangle_{-\alpha}\\
& = \sum_{y_1} A_1(y_1) \sum_{y_2} A_2(y_1,y_2) \ldots \sum_{y_L} A_L(y_1,\ldots,y_L).
\end{split}
\]
From induction, using Lemma \ref{lem:sum A},  we obtain
\[
D(x,n) = \sum_{y} q^{2\sum_{i=1}^L n_i N_{i-1}^-(y)}\prod_{i=1}^L S_{i}(y_i;x,n).
\]
We apply identity \eqref{N2}, then
\[
\begin{split}
D(x,n)& = \sum_{y} q^{2\sum_{i=1}^L y_i N_{i+1}^+(n)}\prod_{i=1}^L S_{i}(y_i;x,n) \\
&= \prod_{i=1}^L\sum_{y_i} S_{i}(y_i;x,n) q^{2y_iN_{i+1}^+(n)}.
\end{split}
\]
Finally, using the explicit expression for $S$ and the definition \eqref{eq:2phi1} of the $_2\varphi_1$-function we find
\[
\begin{split}
D(x,n)&= \prod_{i=1}^L\sum_{y_i=0}^{x_i \wedge n_i} \frac{ (q^{-2x_i})_{y_i} (q^{-2n_i})_{y_i} }{ (q^2)_{y_i} (q^{-2\theta })_{y_i} }\; \alpha^{-y_i} q^{y_i(1+2n_i+2N_{i+1}^+(n)-2N_{i-1}^-(x)-\theta+2i\theta )} \\
& =\prod_{i=1}^L \rphis{2}{1}{q^{-2x_i}, q^{-2n_i}}{q^{-2\theta }}{q^2, \frac{1}{\alpha}q^{1+2n_i+2N_{i+1}^+(n)-2N_{i-1}^-(x)-\theta+2i\theta } }.
\end{split}
\]
Comparing this with the definition of the $q-$Krawtchouk polynomials \eqref{def:q-Krawtchouk}, we see that $D(x,n)$ is indeed a nested product of $q-$Krawtchouk polynomials. \\

\begin{remark}[From orthogonal dualities to triangular dualities] \label{frank}
The triangular duality functions can be recoved from the duality function $D(x,n)$ by taking an appropriate limit. Indeed, note that the $_2\varphi_1$-function is a polynomial in $\alpha^{-1}$ of degree $x_i \wedge n_i$. Assuming $n_i\leq x_i$ it follows that 
\[
\begin{split}
	\lim_{\alpha \to 0} &(-\alpha)^{n_i} \rphis{2}{1}{q^{-2x_i}, q^{-2n_i}}{q^{-2\theta }}{q^2, \frac{1}{\alpha}q^{1+2n_i+2N_{i+1}^+(n)-2N_{i-1}^-(x)-\theta+2i\theta } } \\
	& = 	\frac{ (q^{-2x_i})_{n_i} }{(q^{-2\theta})_{n_i}} q^{-2n_iN_{i-1}^-(x) +n_i(2i-1)\theta +2n_iN_{i+1}^+(n)+n_i^2}.
\end{split}
\]
Comparing this with the 1-site duality function $\hat d_1(x_i,n_i;\hat p_i,r_i)$ defined in the beginning of this section and the definition \eqref{triangular duality product} of the triangular duality function, we obtain
\be\label{trort}
\begin{split}
\lim_{\alpha \to 0} (-\alpha)^{N(n)} D_{\alpha}(x,n) & = \prod_{i=1}^L \hat d_1(x_i,n_i, q^{2N_{i-1}^-(x)}, q^{2i\theta})\, q^{2n_i N_{i+1}^+(n) +n_i^2}\\
& = q^{N(n)^2}\, \widehat D^{\mathrm{tr}}_1(x,n),
\end{split}
\ee
assuming $n_i\leq x_i$ for $i=1,\ldots,n$. Here we used identity \eqref{N1} as well as $\sum_{i=1}^L x_i N_{i-1}^-(x) = \sum_{i=1}^L x_i N_{i+1}^+(x)$.
\newline
Similarly, for $x_i \leq n_i$ we obtain
\[
\begin{split}
	\lim_{\alpha \to 0} &(-\alpha)^{x_i} \rphis{2}{1}{q^{-2x_i}, q^{-2n_i}}{q^{-2\theta }}{q^2, \frac{1}{\alpha}q^{1+2n_i+2N_{i+1}^+(n)-2N_{i-1}^-(x)-\theta+2i\theta } } \\
	& = 	\frac{ (q^{-2n_i})_{x_i} }{(q^{-2\theta})_{x_i}} q^{2x_iN_{i+1}^+(n) +x_i(2i-1)\theta+2x_in_i -2x_iN_{i-1}^-(x)-x_i^2}.
\end{split}
\]
Comparing this with the 1-site duality function $d_1(n_i,x_i;p_i,r_i)$ and the corresponding triangular duality function it follows that
\[
\begin{split}
	\lim_{\alpha \to 0} (-\alpha)^{N(x)} D_{\alpha}(x,n)  &= \prod_{i=1}^L d_1(n_i,x_i;q^{2N_{i-1}^-(n)}, q^{2i\theta}) \, q^{-2 x_i N_{i-1}^-(x)-x_i^2} \\
	& = q^{-N(x)^2}\, D^{\mathrm{tr}}_1(n,x),
\end{split}
\]
provided $x_i\leq n_i$ for $i=1,\ldots,n$. 
\end{remark}

\vskip.5cm
\noindent
\textbf{Calculation of $ \widetilde D$.}
The calculation of $\widetilde D$ is similar to the calculation for $D(x,n)$, but a bit more involved. We fix $x,n \in \Lambda_L$, {$\alpha >0$}, and we evaluate
\[
\widetilde D(x,n)=\langle   D^{\text{tr}}_{-1/\beta q}(\cdot , x),\widehat D^{\text{tr}}_{-q/\beta} (\cdot , n)\rangle_{-\alpha},
\]
for some $\beta \in \R$.
We start with a result for $1$-site duality functions again.
\begin{lemma} \label{lem2:scalar product 1-site}
For $\beta,p,\hat p \in \R\setminus\{0\}$, $m\in \N$ and $s,t \in \{0,\ldots,\theta\}$,
\[
\begin{split}
\sum_{y =0}^{\theta} &q^{-2my} d_{-1/\beta q}(y,s;p,r_i) \hat d_{-q/\beta}(y,t;\hat p, r_i) \mu_{-\alpha}(y) = \\
&\left(\frac{\alpha}{\beta} \right)^{s+t} p^{s+t} \hat p^{-t} q^{(s+t)(1+s-t-2m)} q^{-2s} \frac{(\alpha p q^{1+2s-2m-2i\theta -\theta})_{\infty}}{(\alpha p q^{1-2t-2m-2i\theta +\theta})_\infty} \\
& \quad \cdot \rphis{2}{1}{q^{-2s}, q^{-2t} }{q^{-2\theta }}{q^2, \frac{1}{\alpha p} q^{1+2t+2i\theta -\theta+2m}}.
\end{split}
\]
\end{lemma}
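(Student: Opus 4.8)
The plan is to evaluate the left-hand side directly and then recast the resulting basic hypergeometric sum into the stated form. First I would insert the explicit $1$-site duality functions together with $\mu_{-\alpha}(y)=\binom{\theta}{y}_q(-\alpha)^y q^{-2i\theta y}$. The decisive feature, in contrast with Lemma~\ref{lem:scalar product 1-site}, is that $y$ now occupies the \emph{first} slot of both $d_{-1/\beta q}(y,s;\cdot)$ and $\hat d_{-q/\beta}(y,t;\cdot)$, so the denominators $\Psi_{q,-1}(\theta,s)=\binom{\theta}{s}_q$ and $\binom{\theta}{t}_q$ are constants in $y$. Hence nothing cancels the factor $\binom{\theta}{y}_q$ supplied by the measure, and after gathering the $y$-dependent powers one is left with a sum of three $q$-binomials, $\sum_{y=s\vee t}^{\theta}\binom{y}{s}_q\binom{y}{t}_q\binom{\theta}{y}_q\,z^{y}$, where $z=-\alpha p\,q^{\,s-t-2m-2i\theta}$ and the indicators force $y\ge s\vee t$.

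The key reduction is to eliminate one binomial with the $q$-binomial product identity \eqref{qbinid}, written here as $\binom{y}{s}_q\binom{\theta}{y}_q=\binom{\theta}{s}_q\binom{\theta-s}{y-s}_q$. Pulling out the constant $\binom{\theta}{s}_q$ and reindexing $u=y-s$ turns the sum into $\sum_{u}\binom{\theta-s}{u}_q\binom{u+s}{t}_q\,z^{\,u+s}$. Inspecting the ratio of consecutive terms (after converting the $q$-binomials to $q^2$-Pochhammer symbols by \eqref{qbinom2}) one finds it equals $\tfrac{(1-q^{-2(\theta-s)}q^{2u})(1-q^{2s+2}q^{2u})}{(1-q^{2u+2})(1-q^{2s+2-2t}q^{2u})}\,Z$, so the sum is a \emph{single} terminating $_2\phi_1$ --- no $_3\phi_2$ appears --- namely $_2\phi_1\big(q^{-2(\theta-s)},q^{2s+2};q^{2s+2-2t};q^2,Z\big)$ with $Z=\alpha p\,q^{\,\theta+1-2t-2m-2i\theta}$, up to an explicit prefactor. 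Note that $Z$ is exactly the argument appearing in the target denominator symbol $(\alpha p\,q^{1-2t-2m-2i\theta+\theta})_\infty$.

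It then remains to convert this into the $_2\phi_1$ with upper parameters $q^{-2s},q^{-2t}$ and lower parameter $q^{-2\theta}$ of the statement. I would first reverse the order of summation of the terminating series, which carries the parameters to $q^{-2(\theta-s)},q^{-2(\theta-t)};q^{-2\theta}$, and then apply the $q$-Euler transformation $_2\phi_1(a,b;c;q^2,\zeta)=\frac{(ab\zeta/c;q^2)_\infty}{(\zeta;q^2)_\infty}\,{}_2\phi_1(c/a,c/b;c;q^2,ab\zeta/c)$ recorded in Section~\ref{appendix}. With $a=q^{-2(\theta-s)},b=q^{-2(\theta-t)},c=q^{-2\theta}$ one has $c/a=q^{-2s}$, $c/b=q^{-2t}$ and $c$ unchanged, so this lands precisely on $_2\phi_1\big(q^{-2s},q^{-2t};q^{-2\theta};q^2,\cdot\big)$; it is exactly this step that generates the ratio of infinite $q^2$-Pochhammer symbols. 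Since the two arguments in that ratio differ by an even power of $q$, it reduces to a finite product via $(a;q^2)_\infty/(aq^{2k};q^2)_\infty=(a;q^2)_k$, as it must for a terminating sum.

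Finally I would assemble the prefactors: the $\beta$-powers $(-1/\beta q)^s(-q/\beta)^t$, the $p$- and $\hat p$-powers, the factor $r_i^{\,s+t}=q^{2i\theta(s+t)}$, and all the $q$-powers produced by \eqref{qbinom2}, by the reindexing $u=y-s$, by the series reversal, and by the Euler transformation, matching them against $(\alpha/\beta)^{s+t}p^{s+t}\hat p^{-t}q^{(s+t)(1+s-t-2m)}q^{-2s}$. The main difficulty here is not conceptual but the careful bookkeeping of these numerous $q$-power and sign prefactors, together with verifying that the chosen transformation reproduces the stated infinite-product ratio exactly; the reduction to a single $_2\phi_1$ via \eqref{qbinid} is what makes the whole computation tractable and keeps it parallel to the proof of Lemma~\ref{lem:scalar product 1-site}.
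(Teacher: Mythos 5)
Your route is correct in substance and arrives at the stated formula, but it is a reordered variant of the paper's proof rather than a genuinely independent one. Both proofs reduce the left-hand side to the triple-binomial sum $\sum_{y\ge s\vee t}\binom{y}{s}_q\binom{y}{t}_q\binom{\theta}{y}_q z^y$ with $z=-\alpha p\,q^{s-t-2m-2i\theta}$, and both then reach the target via exactly one series reversal \eqref{change} and one Heine transformation \eqref{euler}. The paper, assuming $s\le t$, evaluates the sum by the appendix Lemma \ref{applemma}, obtaining a terminating ${}_2\varphi_1$ with parameters $q^{2t+2},q^{2t-2\theta};q^{2+2t-2s}$, applies Heine \emph{first} (this is where its infinite-product ratio is born) and reverses \emph{second}; you instead collapse two of the three binomials by \eqref{qbinid} --- in effect a self-contained substitute for Lemma \ref{applemma} --- obtaining the ${}_2\varphi_1$ with parameters $q^{-2(\theta-s)},q^{2s+2};q^{2s+2-2t}$ and argument $Z=\alpha p\,q^{\theta+1-2t-2m-2i\theta}$, then reverse first and apply Heine second. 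I checked your pivotal identifications: the term-ratio computation is right, your $Z$ is indeed the denominator argument of the stated infinite-product ratio, and after reversal Heine with $(a,b,c)=(q^{-2(\theta-s)},q^{-2(\theta-t)},q^{-2\theta})$ gives $c/a=q^{-2s}$, $c/b=q^{-2t}$ and argument $ab\zeta/c=\frac{1}{\alpha p}\,q^{1+2t+2i\theta-\theta+2m}$, matching the statement. Reconciling your Heine ratio, whose arguments carry $(\alpha p)^{-1}$, with the stated ratio in powers of $\alpha p$ is precisely the job of \eqref{1.8.15koeoek} (together with \eqref{eq:id Pochhammer}); this is the same final bookkeeping the paper performs, so deferring it is legitimate.

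One step must be made explicit, however: your ${}_2\varphi_1$ identification is valid only for $t\le s$. If $t>s$, the lower parameter $q^{2(s-t+1)}$ lies in $q^{-2\N}$, the would-be leading term of your reindexed sum (proportional to $\binom{s}{t}_q$) vanishes, and the reversal prefactor $(q^{2s+2-2t};q^2)_{\theta-s}$ contains a zero factor, so the sum is not represented by that single ${}_2\varphi_1$ with those parameters. This is the mirror image of the hypothesis $x\le n\le m$ in Lemma \ref{applemma}, which forces the paper to assume $s\le t$; the paper then drops the ordering by observing that, for a generic constant in place of $z$, both the original sum and the closed-form answer are symmetric under $s\leftrightarrow t$. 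You need the same one-line patch: either assume without loss of generality that $t\le s$ and invoke this symmetry, or treat the case $s<t$ by the mirrored collapse $\binom{y}{t}_q\binom{\theta}{y}_q=\binom{\theta}{t}_q\binom{\theta-t}{y-t}_q$ with reindexing $u=y-t$. With that added, your argument is complete.
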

\begin{proof}
Let us denote the sum on the left hand side by $\Sigma$. From the explicit expressions of the 1-site duality functions we find
\[
\begin{split}
\Sigma &= \sum_{y=0}^{\theta} q^{-2my} d_{-1/\beta q}(y,s;p,r_i) \hat d_{-q/\beta}(y,t;\hat p, r_i) \mu_{-\alpha}(y) \\
& = \sum_{y \geq s \vee t} \frac{ \binom{y}{s}_q \binom{y}{t}_q }{ \binom{\theta}{s}_q \binom{\theta}{t}_q } (-\beta q)^{-s} (-q/\beta)^t q^{y(s-t-2m)} p^y \hat p^{-t} q^{2i\theta (s+t)} \binom{\theta}{y}_q (-\alpha)^y q^{-2i\theta y} \\
& = C_1 \sum_{y \geq s \vee t} \binom{y}{s}_q \binom{y}{t}_q \binom{\theta}{y}_q C_2^y,
\end{split}
\]
where $C_2 = -\alpha p q^{s-t-2i\theta -2m}$ and
\[
\begin{split}
C_1 & = \frac{ (-\beta q^{-2i\theta } )^{-(s+t)} q^{t-s} \hat p^{-t} }{\binom{\theta}{s}_q \binom{\theta}{t}_q} \\
& = (\beta q^{-2i\theta  +2 +\theta})^{-s} (\beta \hat p q^{-2i\theta  +\theta} )^{-t} \frac{(q^2)_s (q^2)_t}{(q^{-2\theta })_s (q^{-2\theta })_t}.
\end{split}
\]
We focus on the sum. Assume $s \leq t$ and let $C$ be an arbitrary constant, then we obtain from Lemma \ref{applemma},
\[
\begin{split}
S &:= \sum_{y \geq s \vee t} \binom{y}{s}_q \binom{y}{t}_q \binom{\theta}{y}_q  C^y  \\
&  = \frac{ (-C)^t q^{t(1+\theta-s)}q^{s^2} (q^{-2\theta })_t}{(q^2)_s (q^2)_{t-s} } \rphis{2}{1}{q^{2t+2}, q^{2t-2\theta } }{q^{2+2t-2s}}{q^2, -Cq^{1-s-t+\theta}}.
\end{split}
\]
Next we transform this $_2\varphi_1$-series into another $_2\varphi_1$-series using Heine's transformation \eqref{euler}, and then we reverse the order of summation, see identity \eqref{change}, to obtain
\[
\begin{split}
&\rphis{2}{1}{q^{2t+2}, q^{2t-2\theta }}{q^{2+2t-2s}}{q^2, -Cq^{1-s-t+\theta}}  \\
& \qquad = \frac{(-Cq^{1+s+t-\theta})_{\infty}}{(-Cq^{1-s-t+\theta})_\infty}
 \rphis{2}{1}{q^{-2s}, q^{2-2s+2\theta } }{q^{2+2t-2s}}{q^2, -Cq^{1+s+t-\theta}} \\
& \qquad =  (Cq^{1+s+t-\theta})^s q^{-s-s^2} \frac{ (q^{2+2\theta -2s})_s }{ (q^{2+2t-2s})_s} \frac{(-Cq^{1+s+t-\theta})_{\infty}}{(-Cq^{1-s-t+\theta})_\infty}\\
 &  \qquad \qquad \cdot \rphis{2}{1}{q^{-2s}, q^{-2t} }{q^{-2\theta }}{q^2, -\frac{1}{C} q^{1+s+t-\theta}} \;.
\end{split}
\]
Using identities \eqref{1.8.15koeoek} and \eqref{eq:id Pochhammer} for the $q-$Pochhammer symbols this gives us
\[
S = (-Cq^{\theta+1})^{s+t} \frac{ (q^{-2\theta })_s (q^{-2\theta })_t  }{ (q^2)_s (q^2)_t } \frac{(-Cq^{1+s+t-\theta})_{\infty}}{(-Cq^{1-s-t+\theta})_\infty} \rphis{2}{1}{q^{-2s}, q^{-2t} }{q^{-2\theta }}{q^2, -\frac{1}{C} q^{1+s+t-\theta}}.
\]
Note that this expression is symmetric in $s$ and $t$, so we can drop the condition $s \leq t$. Using this with $C=C_2$  and collecting terms gives
\[
\begin{split}
\Sigma &=  \left(\frac{\alpha}{\beta} \right)^{s+t} p^{s+t} \hat p^{-t} q^{(s+t)(1+s-t-2m)} q^{-2s} \frac{(\alpha p q^{1+2s-2m-2i\theta -\theta})_{\infty}}{(\alpha p q^{1-2t-2m-2i\theta +\theta})_\infty} \\
& \quad \cdot \rphis{2}{1}{q^{-2s}, q^{-2t} }{q^{-2\theta }}{q^2, \frac{1}{\alpha p} q^{1+2t+2i\theta -\theta+2m}}.
\end{split}
\]
This proves the lemma.
\end{proof}
\vskip.4cm
\noindent
We introduce auxiliary functions again: for $i=1,\ldots,L$,
\[
B_i(y_1,\ldots,y_i) = d_{-1/\beta q}(y_i,x_i;p_i(x),r_i) \hat d_{-q/\beta}(y_i,n_i;\hat p_i(y), r_i) \mu_{-\alpha}(y_i).
\]
Then Lemma \ref{lem2:scalar product 1-site} with
\[
s=x_i,\quad t=n_i, \quad p=p_i(x)=q^{2N_{i-1}^-(x)}, \quad \hat p = \hat p_i(y)=q^{2N_{i-1}^-(y)}, \quad m = N_{i+1}^+(n),
\]
gives the following identity involving the functions $B_i$.

\vskip.2cm
\noindent
\begin{lemma} \label{lem:sum B}
For $i\in \{1,\ldots,L\}$ and $y_1,\ldots,y_{i-1} \in \N$,
\[
\sum_{y_i} B_i(y_1,\ldots,y_i) q^{-2N_{i+1}^+(n)N_{i}^-(y)} = \sum_{y_i} T_{i}(y_i;x,n) q^{-2N_{i}^+(n) N_{i-1}^-(y)} ,
\]
where $T_i(y_i;x,n) = T_i^{(1)}(x,n) T_i^{(2)}(y_i;x,n)$ with
\[
\begin{split}
T_{i}^{(1)}(x,n)= &\left(\frac{\alpha }{\beta} \right)^{x_i+n_i} q^{(x_i+n_i)(1-2N_{i+1}^+(n)-n_i + 2N_{i-1}^-(x)+x_i)} q^{-2x_i}\\
 & \quad \cdot \frac{(\alpha q^{1+N_{i}^-(x)-2N_{i+1}^-(n)-\theta(2i+1)})_{\infty}}{(\alpha q^{1+2N_{i-1}^-(x)-2N_{i}^+(n)-\theta(2i-1)})_\infty}, \\
T_i^{(2)}(y_i;x,n)=& \frac{(q^{-2x_i})_{y_i} (q^{-2n_i})_{y_i}}{(q^2)_{y_i} (q^{-2\theta })_{y_i}} \left(\frac{1}{\alpha} q^{1+2n_i+2N_{i+1}^+(n)-2N_{i-1}^-(x)+2i\theta -\theta}\right)^{y_i} .
\end{split}
\]
\end{lemma}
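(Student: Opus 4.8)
The plan is to obtain Lemma \ref{lem:sum B} as a direct specialization of Lemma \ref{lem2:scalar product 1-site}, the bulk of the work being careful bookkeeping of the $q$-exponents. First I would note that in the sum over $y_i$ the earlier variables $y_1,\dots,y_{i-1}$ are frozen, so that $\hat p_i(y)=q^{2N_{i-1}^-(y)}$ is a genuine constant with respect to $y_i$; this legitimizes identifying the constant parameter $\hat p$ of Lemma \ref{lem2:scalar product 1-site} with $\hat p_i(y)$. I would then read off the parameter dictionary $s=x_i$, $t=n_i$, $p=p_i(x)=q^{2N_{i-1}^-(x)}$, $\hat p=\hat p_i(y)=q^{2N_{i-1}^-(y)}$, and, crucially, $m=N_{i+1}^+(n)$.

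The key manipulation is to split the weight on the left-hand side using $N_i^-(y)=N_{i-1}^-(y)+y_i$, namely
\[
q^{-2N_{i+1}^+(n)N_i^-(y)}=q^{-2N_{i+1}^+(n)N_{i-1}^-(y)}\cdot q^{-2N_{i+1}^+(n)y_i}.
\]
The first factor is independent of $y_i$ and is pulled out of the sum, while the second factor $q^{-2N_{i+1}^+(n)y_i}$ is exactly the factor $q^{-2my}$ of Lemma \ref{lem2:scalar product 1-site} with $m=N_{i+1}^+(n)$. Hence $\sum_{y_i}B_i(y_1,\dots,y_i)\,q^{-2N_{i+1}^+(n)y_i}$ equals the right-hand side of Lemma \ref{lem2:scalar product 1-site} evaluated at the parameters above.

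Next I would track the $y$-dependence of that right-hand side: the only place where $\hat p$ (hence $y$) occurs is the factor $\hat p^{-t}=q^{-2n_iN_{i-1}^-(y)}$. Combining it with the factor pulled out above gives
\[
q^{-2N_{i+1}^+(n)N_{i-1}^-(y)}\cdot q^{-2n_iN_{i-1}^-(y)}=q^{-2\left(N_{i+1}^+(n)+n_i\right)N_{i-1}^-(y)}=q^{-2N_i^+(n)N_{i-1}^-(y)},
\]
using $N_i^+(n)=n_i+N_{i+1}^+(n)$; this reproduces precisely the weight on the right-hand side of the claim. All the remaining, $y$-independent, factors from Lemma \ref{lem2:scalar product 1-site} assemble into the prefactor $T_i^{(1)}(x,n)$, while the summand of the $_2\varphi_1$-series is exactly $T_i^{(2)}(y_i;x,n)$, so that leaving the $y_i$-sum open recovers $\sum_{y_i}T_i(y_i;x,n)\,q^{-2N_i^+(n)N_{i-1}^-(y)}$.

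The heart of the verification is therefore the exponent arithmetic: confirming that the finite $q$-powers combine into $q^{(x_i+n_i)(1-2N_{i+1}^+(n)-n_i+2N_{i-1}^-(x)+x_i)}q^{-2x_i}$, and that the infinite $q$-Pochhammer arguments collapse to the stated ones after substituting $p=q^{2N_{i-1}^-(x)}$ and repeatedly invoking the telescoping relations $N_i^-(x)=N_{i-1}^-(x)+x_i$ and $N_i^+(n)=n_i+N_{i+1}^+(n)$. I expect the main obstacle to be exactly here: not any conceptual difficulty, but keeping the placement of the $N^{\pm}$ indices and the various $\theta$-shifts error-free throughout the simplification.
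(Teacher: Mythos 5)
Your proposal is correct and is essentially identical to the paper's own proof: the paper obtains Lemma \ref{lem:sum B} precisely by invoking Lemma \ref{lem2:scalar product 1-site} with $s=x_i$, $t=n_i$, $p=q^{2N_{i-1}^-(x)}$, $\hat p=q^{2N_{i-1}^-(y)}$, $m=N_{i+1}^+(n)$, with the weight split $N_i^-(y)=N_{i-1}^-(y)+y_i$ and the recombination $\hat p^{-t}\,q^{-2N_{i+1}^+(n)N_{i-1}^-(y)}=q^{-2N_i^+(n)N_{i-1}^-(y)}$ exactly as you describe. One small caveat: carrying out your exponent arithmetic gives the numerator $(\alpha q^{1+2N_i^-(x)-2N_{i+1}^+(n)-\theta(2i+1)})_\infty$ in $T_i^{(1)}$, so the printed $q^{1+N_i^-(x)-2N_{i+1}^-(n)-\theta(2i+1)}$ is a typo in the paper (missing factor $2$ and a $-$ for $+$ superscript), as confirmed by the telescoping with the denominator at index $i+1$ used later in the computation of $\widetilde D$.
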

\vskip.4cm
\noindent
Now we can perform the calculation for $\widetilde D$. We write $\widetilde D(x,n)$ in terms of the auxiliary functions $B_i$,
\[
\begin{split}
\widetilde D(x,n)&=\langle   D^{\text{tr}}_{-1/\beta q}(\cdot , x),\widehat D^{\text{tr}}_{-q/\beta} (\cdot , n)\rangle_{-\alpha}\\
& = \sum_{y_1} B_1(y_1) \sum_{y_2} B_2(y_1,y_2) \ldots \sum_{y_L} B_L(y_1,\ldots,y_L).
\end{split}
\]
Then from Lemma \ref{lem:sum B} and induction we find
\[
\begin{split}
\widetilde D(x,n)&= \sum_y \prod_{i=1}^L T_{i}(y_i;x,n) \\
& = \prod_{i=1}^L T_i^{(1)}(x,n) \sum_{y_i} T_{i}^{(2)}(y_i;x,n) .
\end{split}
\]
Note that
\[
\sum_{y_i} T_{i}^{(2)}(y_i;x,n) = \rphis{2}{1}{q^{-2x_i},q^{-2n_i}}{q^{-2\theta }}{q^2, \frac{1}{\alpha}q^{1+2n_i+2N_{i+1}^+(n)-2N_{i-1}^-(x)+2i\theta -\theta}},
\]
and
\[
\prod_{i=1}^L T_i^{(1)}(x,n) = \left(\frac{\alpha}{\beta} \right)^{N(x+n)}\frac{q^{N(x)(N(x)-1)}}{q^{N(n)(N(n)-1)}}\frac{(\alpha q^{1+2N(x)-2\theta L-\theta})_\infty}{(\alpha  q^{1-2N(n)-\theta})_\infty} ,
\]
where we used that the product of the ratio of the $q-$shifted factorials telescopes, and identities \eqref{N1} (for $n=x$) and \eqref{N2}. So we have
\[
\widetilde D(x,n) = \left(\frac{\alpha}{\beta} \right)^{N(x+n)} \dfrac{q^{N(x)(N(x)-1)}}{q^{N(n)(N(n)-1)}}  \frac{(\alpha q^{1+2N(x)-2\theta L-\theta})_\infty}{(\alpha  q^{1-2N(n)-\theta})_\infty} D(x,n).
\]

\section{Proof for ASIP$(q,\theta)$}\label{ASIP}

In this section we will prove  Theorem \ref{teoasip}. The proof we used for Theorem \ref{teoasep} in the previous section unfortunately does not work for ASIP. The problem lies in the computation of the function $\widetilde D$. To be more precise, the analogue of Lemma \ref{lem:scalar product 1-site} in the ASIP case leads to an infinite sum that, depending on values of $s$, $t$ and $m$, will diverge. However, the computation of the function $D$ for ASIP is completely analogous to the computation for ASEP, and this leads to multivariate $q-$Meixner polynomials as self-duality functions. Because of the similarities between ASIP and ASEP we can make an educated guess for the explicit expression of $\widetilde D$ in terms of $D$, and then verify biorthogonality relations directly.

\vskip.5cm
\noindent
First we need to verify that the function $D$ in Theorem \ref{teoasip} is a self-duality function. We can verify in exactly the same way as for ASEP that
\[
D_\alpha(x,n)=\langle \widehat D^{\text{tr}}_{q/\alpha}(x,\cdot), D^{\text{tr}}_{1/\alpha q}(n,\cdot)\rangle_{-\alpha},
\]
so $D$ is indeed a self-duality function by Theorem \ref{prop:orthogonality}. Note that the function $\widetilde D$ in Theorem \ref{teoasip} is of the form
$C_1(x)C_2(n) D(x,n)$, where $C_1$ and $C_2$ only depend on the total number of particles $N(x)$ and the total number of dual particles $N(n)$. Since the total number of particles is conserved under the dynamics of ASIP, and $D$ is a self-duality function for ASIP, it follows that $\widetilde D$ is also a self-duality function. It only remains to show that $D$ and $\widetilde D$ are biorthogonal with respect to the measure $\mu_\beta$, or equivalently, that functions $D(\, \cdot\,,n)$, $n \in \Lambda_L$, are orthogonal with respect to $C_1 \mu_\beta$.\\

The proof of the orthogonality uses the  orthogonality relations \eqref{ortho} for the $q-$Meixner polynomials $M_n(q^{-x}):=M_n(q^{-x};b,c;q)$
with  $0<b<q^{-1}$ and $c>0$. Using identities for $q-$shifted factorials, these relations can be rewritten as follows:
\begin{equation} \label{orthogonality qMeixner}
\begin{split}
\sum_{x=0}^\infty W(x;b,c;q) M_{n}(q^{-x}) M_{n'}(q^{-x}) =  \delta_{n,n'} H(n;b,c;q),
\end{split}
\end{equation}
with
\[
\begin{split}
W(x;b,c;q)&= \frac{(-bcq^{x+1};q)_\infty (bq;q)_x   }{(q;q)_x} c^x q^{\frac12 x(x-1)},\\
H(n;b,c;q) &=  \frac{(-cq^{-n};q)_\infty (q;q)_{n} }{ (bq;q)_{n} } c^{-n} q^{\frac12 n (n-1)}.
\end{split}
\]

\begin{proposition}
Let $L \in \N$, $c>0$ and $b_i \in (0,q^{-1})$, $i=1,\ldots,L$.  Define multivariate $q-$Meixner polynomials $m_n(x) = m_n(x;b_1,\ldots,b_L,c;q)$ by
\[
m_n(x) = \prod_{i=1}^L M_{n_i}(q^{-x_i};b_i, cB_{i-1} q^{N_{i-1}^-(x)-N_{i+1}^+(n)+i-1};q), \qquad x,n \in \N^L,
\]
where $B_{i} = \prod_{l=1}^{i} b_l$ (the empty product being equal to 1). Moreover, define $w(x) = w(x;b_1,\ldots,b_L,c;q)$ and $h(n)=h(n;b_1,\ldots,b_L,c;q)$ by
\[
\begin{split}
w(x) &= q^{\frac12 N(x)(N(x)-1)} c^{N(x)} (-cB_L q^{N(x)+L};q)_\infty \prod_{i=1}^L \frac{ (b_iq;q)_{x_i} }{(q;q)_{x_i}} (b_iq)^{N_{i+1}^+(x)},\\
h(n) & = q^{\frac12 N(n)(N(n)-1)} c^{-N(n)} (-c q^{-N(n)};q)_\infty \prod_{i=1}^L \frac{(q;q)_{n_i}}{ (b_iq;q)_{n_i} } (b_iq)^{-N_{i+1}^+(n)},
\end{split}
\]
then
\[
\sum_{x \in \N^L} w(x) m_{n}(x) m_{n'}(x) = \delta_{n,n'} h(n).
\]
\end{proposition}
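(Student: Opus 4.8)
The plan is to prove the identity by induction on $L$, summing out the variables from the top site $x_L$ down to $x_1$ and invoking the univariate $q$-Meixner orthogonality \eqref{orthogonality qMeixner} at each step. The base case $L=1$ is immediate: since $B_0=1$, $N_0^-(x)=0$ and $N_2^+(n)=0$, one has $m_n(x)=M_{n_1}(q^{-x_1};b_1,c;q)$, $w(x)=W(x_1;b_1,c;q)$ and $h(n)=H(n_1;b_1,c;q)$, so the claim is exactly \eqref{orthogonality qMeixner}. The parameter ranges $b_i\in(0,q^{-1})$ and $c>0$ guarantee positivity of all effective parameters produced below, hence absolute convergence of the iterated sums and the legitimacy of interchanging summations.

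For the inductive step I would first isolate the $x_L$-dependence of the summand. The only factors of $m_n(x)$ and $m_{n'}(x)$ that involve $x_L$ are the site-$L$ polynomials $M_{n_L}(q^{-x_L};b_L,c_L;q)$ and $M_{n_L'}(q^{-x_L};b_L,c_L;q)$, where $c_L=cB_{L-1}q^{N_{L-1}^-(x)+L-1}$ (using $N_{L+1}^+=0$); crucially this parameter is \emph{independent} of the dual index, so both polynomials carry the same $c_L$. Writing $N(x)=N_{L-1}^-(x)+x_L$ and expanding $q^{\frac12 N(x)(N(x)-1)}$, I would check that the $x_L$-dependent part of $w(x)$ collapses to exactly $W(x_L;b_L,c_L;q)$: the infinite product matches because $b_LB_{L-1}=B_L$ turns $(-cB_Lq^{N(x)+L};q)_\infty$ into $(-b_Lc_Lq^{x_L+1};q)_\infty$, while the powers of $q$, of $c^{N(x)}$ and of $\prod_{i<L}(b_iq)^{N_{i+1}^+(x)}$ recombine into $c_L^{x_L}q^{\frac12 x_L(x_L-1)}$. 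Summing over $x_L$ then produces, via \eqref{orthogonality qMeixner}, the factor $\delta_{n_L,n_L'}\,H(n_L;b_L,c_L;q)$.

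The heart of the argument is to recognize what remains as the $(L-1)$-site instance of the same identity, but with $c$ replaced by $\tilde c:=cq^{-n_L}$. Writing $x'=(x_1,\dots,x_{L-1})$ and $n'=(n_1,\dots,n_{L-1})$, I would split $H(n_L;b_L,c_L;q)$ into its $N_{L-1}^-(x)$-dependent and $N_{L-1}^-(x)$-independent parts. Using the exact identity $(-c_Lq^{-n_L};q)_\infty=(-\tilde c B_{L-1}q^{N(x')+(L-1)};q)_\infty$ together with $c^{N(x')}q^{-N(x')n_L}=\tilde c^{\,N(x')}$, the $N_{L-1}^-(x)$-dependent part combines with the $x_L$-independent part of $w(x)$ to form precisely the $(L-1)$-site weight $w'(x')$ built from $b_1,\dots,b_{L-1},\tilde c$. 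One also checks that once $n_L=n_L'$ is imposed, the site-$i$ parameter of $m_n$ for $i<L$ becomes $\tilde c B_{i-1}q^{N_{i-1}^-(x')-N_{i+1}^+(n')+i-1}$, i.e. exactly the site-$i$ parameter of the $(L-1)$-site polynomial with base parameter $\tilde c$. The induction hypothesis then yields $\delta_{n',\,(n_1',\dots,n_{L-1}')}\,h'(n';\tilde c)$.

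It remains to verify the telescoping of the norm, namely $\rho_L\cdot h'(n';cq^{-n_L})=h(n)$, where $\rho_L$ is the $N_{L-1}^-(x)$-independent part of $H(n_L;b_L,c_L;q)$. This is a routine check: the identity $\tfrac12 n_L(n_L-1)+\tfrac12 N(n')(N(n')-1)+n_LN(n')=\tfrac12 N(n)(N(n)-1)$ reassembles the Gaussian factor, the substitution $\tilde c=cq^{-n_L}$ restores $c^{-N(n)}$ and $(-cq^{-N(n)};q)_\infty$, and the factor $(B_{L-1}q^{L-1})^{-n_L}$ from $\rho_L$ together with $\prod_{i<L}(b_iq)^{-N_{i+1}^+(n')}$ from $h'$ telescopes into $\prod_{i=1}^L(b_iq)^{-N_{i+1}^+(n)}$. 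The main obstacle throughout is the bookkeeping of the three \emph{global} factors $q^{\frac12 N(x)(N(x)-1)}$, $c^{N(x)}$ and $(-cB_Lq^{N(x)+L};q)_\infty$, which couple all coordinates and do not factorize; the entire proof hinges on the two exact matches above — that their $x_L$-part reconstitutes $W(x_L;b_L,c_L;q)$, and that the leftover $N_{L-1}^-(x)$-dependence of $H$ converts the $(L-1)$-site global factors from $c$ to $cq^{-n_L}$.
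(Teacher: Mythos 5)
Your proposal is correct and follows essentially the same route as the paper: the paper also evaluates the sum by iterated univariate $q$-Meixner orthogonality from site $L$ inward, after observing that the nested parameters make the ratio $w(x)/h(n)$ telescope into the product $\prod_{i=1}^{L} W_i(x,n)/H_i(x,n)$ of one-site weights over one-site norms. Your induction on $L$ with the base-parameter shift $c\mapsto cq^{-n_L}$ is a recursive repackaging of that same telescoping, with the added merit of making explicit that the deltas $\delta_{n_j,n_j'}$ for $j>i$ must already be in force before the site-$i$ sum is a genuine instance of the univariate orthogonality relation \eqref{orthogonality qMeixner}.
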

\begin{proof}
We use the shorthand notations
\[
\begin{split}
W_i(x,n) & = W(x_i;b_i, cB_{i-1}q^{N_{i-1}^-(x) - N_{i+1}^+(n) + i-1};q),\\
H_i(x,n) & = H(n_i;b_i, cB_{i-1}q^{N_{i-1}^-(x) - N_{i+1}^+(n) + i-1};q),\\
M_i(x,n) & = M_{n_i}(q^{-x_i};b_i, cB_{i-1}q^{N_{i-1}^-(x) - N_{i+1}^+(n) + i-1};q).
\end{split}
\]
Note that $M_i(x,n)$ and $W_i(x,n)$ depend only on $x_1,\ldots,x_i$ and \emph{not} on $x_{i+1},\ldots,x_L$, and $H_i(x,n)$ depends only on $x_1,\ldots x_{i-1}$ and not on $x_i,\ldots,x_L$. Furthermore, in this notation we have
\[
m_n(x) = \prod_{i=1}^L M_i(x,n).
\]
We have a similar identity involving $w,h,W_i$ and $H_i$: using identities for $N_i^+$, $N_i^-$ and $N$ from Section \ref{functionN} and telescoping products, we obtain
\[
\frac{w(x)}{h(n)} = \prod_{i=1}^L \frac{ W_i(x,n) }{H_i(x,n)}.
\]
Then, for $n,n'\in \N^L$,
\[
\begin{split}
\sum_{x \in \N^L} \frac{w(x) }{h(n)} m_n(x) m_{n'}(x)  &= \sum_{x_1 \in \N} \frac{W_1(x,n)}{H_1(x,n)} M_1(x,n) M_1(x,n') \\
& \qquad \cdot \sum_{x_{2} \in \N} \frac{W_{2}(x,n)}{H_{2}(x,n)} M_2(x,n) M_{2}(x,n') \\
& \qquad  \cdots \sum_{x_L \in \N} \frac{W_L(x,n)}{H_L(x,n)} M_L(x,n) M_L(x,n').
\end{split}
\]
Using the orthogonality relations \eqref{orthogonality qMeixner} for $q-$Meixner polynomials, which imply
\[
\sum_{x_i \in \N} \frac{ W_i(x,n) }{H_i(x,n)} M_i(x,n) M_i(x,n') = \delta_{n_i,n_i'},
\]
we obtain
\[
\sum_{x \in \N^L} \frac{ w(x) }{h(n)} m_n(x) m_{n'}(x) = \delta_{n,n'},
\]
which is the desired orthogonality relation.
\end{proof}
\noindent
The orthogonality relations for the duality functions $D$ and $\widetilde D$ follow from the above orthogonality relations for multivariate $q-$Meixner polynomials by replacing $q$ by $q^2$ and setting
\[
c=\alpha q^{\theta +1}, \quad b_i = q^{2\theta -2}, \quad\text{for } i=1,\ldots,L.
\]

\section{{Orthogonal dualities from symmetries}}\label{Sim}
{In this section we show the link between the self-duality functions constructed in the previous sections and the existence of symmetries of the generator. To do this we rely on the algebraic approach  developed in \cite{CGRS1}-\cite{CGRS} for the construction of the generator  in terms of the Casimir operator of the quantized universal enveloping algebra $\mathcal U_q(\mathfrak{sl}_2)$, where a family of finite, respectively infinite, dimensional representations are used for ASEP($q,\theta$) and ASIP($q,\theta$), respectively. The final aim will be to give an expression in terms of the generators of the algebra for the symmetry $\s_{\alpha,\sigma}$ connected to the orthogonal duality function $\D_{\alpha,\sigma}$.}

\subsection{The quantized enveloping algebra {${\mathcal{U}}_q(\mathfrak{sl}_2)$}}
\label{quantum-algebra}
For $q\in(0,1)$ we consider the complex unital algebra $\mathcal U_q(\mathfrak{sl}_2)$ with generators ${\A^+}, {\A^-}, {{\A}^0}$ satisfying the commutation
relations
\begin{eqnarray}
&& q^{{{\A}^0}} {\A^+} = q \;  {\A^+} q^{{{\A}^0}}, \nn\\
&& q^{{{\A}^0}} {\A^-}= q^{-1}\, {\A^-}q^{{{\A}^0}}, \label{comm-new}\\
&& [{\A^+},{\A^-}]=[2{{\A}^0}]_q. \nonumber
\end{eqnarray}
Here $[A,B]=AB-BA$ is the usual commutator, and
\[
[A]_q:= \frac{ q^A - q^{-A}}{q-q^{-1}}.
\]
(compare to the $q-$number defined in \eqref{[a]_q}).
\noindent
In the limit $q\to 1$ the algebra $\mathcal U_q(\mathfrak{sl}_2)$ reduces
to the enveloping algebra $\mathcal U(\mathfrak{sl}_2)$. The Casimir element $C$ given by
\be
\label{casimir}
C =
\A^+ \A^- + \left[ {\A}^0\right]_q \left[\A^0-1\right]_q
\ee
is in the center of $\mathcal U_q(\mathfrak{sl}_2)$, i.e.~$[C,A]=0$ for all $A  \in \mathcal U_q(\mathfrak{sl}_2)$.

\subsubsection*{Co-product structure}
\label{cooooo}

The co-product for ${\mathcal{U}}_q(\mathfrak{sl}_2)$ is the map
$\Delta: {\mathcal{U}}_q(\mathfrak{sl}_2)\to {\mathcal{U}}_q(\mathfrak{sl}_2) \otimes {\mathcal{U}}_q(\mathfrak{sl}_2)$ given on the generators by
\begin{eqnarray}
\label{co-product2}
\Delta({\A^\pm}) & = & {\A^\pm} \otimes  q^{-{{\A}^0}} + q^{{{\A}^0}} \otimes {\A^\pm}\;, \nonumber \\
\Delta({{\A}^0}) & = & {{\A}^0} \otimes 1 +  1\otimes {{\A}^0}\;,
\end{eqnarray}
and it is extended to $\mathcal U_q(\mathfrak{sl}_2)$ as an algebra homomorphism. In particular $\Delta$ preserves the commutation relations \eqref{comm-new}.

\noindent
We also need iterated coproducts mapping from ${\mathcal{U}}_q(\mathfrak{sl}_2)$ to tensor products of copies of ${\mathcal{U}}_q(\mathfrak{sl}_2)$.
We define iteratively
$\Delta^{n}: {\mathcal{U}}_q(\mathfrak{sl}_2) \to {\mathcal{U}}_q(\mathfrak{sl}_2)^{\otimes (n+1)}$},
i.e. higher powers of $\Delta$, as follows:
\[
\Delta^1:=\Delta\nn, \qquad \Delta^n := (\Delta\otimes \underbrace{1\otimes\ldots\otimes 1}_{n-1 \text{ times}}) \Delta^{n-1}, \quad n \geq 2.
\]
For the generators of $\mathcal U_q(\mathfrak{sl}_2)$ this implies, for $n\ge 2$,
\begin{eqnarray}\label{co-product-L}
\Delta^{n}({\A^\pm}) & = & \Delta^{n-1}({\A^\pm}) \otimes q^{-{{\A}^0}}  +  q^{\Delta^{n-1}({{\A}^0})} \otimes {\A^\pm} \;,  \nonumber\\
\Delta^{n}({{\A}^0}) & = & \Delta^{n-1}({{\A}^0})  \otimes 1 +  \underbrace{1\otimes\ldots\otimes 1}_{n \text{ times}} \otimes {{{\A}^0}}\;.
\end{eqnarray}

\subsubsection*{Representations of the algebra $\mathcal U_q(\mathfrak{sl}_2)$}

From here onward we use the notation $\{|n\rangle \mid  n \in\mathbb K_{\sigma}\}$ for the standard  orthonormal basis of $\ell^2(\mathbb K_{\sigma})$ with $ \mathbb K_{\sigma} =\left\lbrace 0,1, \ldots, \theta\right\rbrace$  if $ \sigma = -1$ and $ \mathbb K_{\sigma} = \mathbb N$ if $ \sigma=1$.
Here and in the following, with abuse of notation, we use the same symbol for a linear operator and the matrix associated to it in a given basis.\\

\noindent
In order to define Markov process generators from the quantized enveloping algebra $\mathcal U_q(\mathfrak{sl}_2)$ we need the following two families of representations.

\paragraph*{Infinite dimensional representations.}
The following ladder operators defined on the standard orthonormal basis of $\ell^2(\N)$ define a family, labeled by $\theta \in \R^+$, of irreducible representations of $\mathcal U_q(\mathfrak{sl}_2)$:
\begin{equation}
\label{stand-repr}
\left\{
\begin{array}{lll}
\A^+ |n\rangle &=& \sqrt{[n+\theta ]_q [n+1]_q}\;| n +1 \rangle
\\
\A^- |n\rangle &=& -\sqrt{[n]_q [n+\theta -1]_q} \;| n-1 \rangle
\\
\A^0 |n\rangle &=& (n+\theta/2) \;| n \rangle \;.
\end{array}
\right.
\end{equation}

\paragraph*{Finite dimensional representations.}
There is a similar representation of $\mathcal U_q(\mathfrak{sl}_2)$ on the finite dimensional Euclidian space $\mathbb C^{\theta+1}$, where $\theta \in \N$.
In this case the irreducible representations of $\mathcal U_q(\mathfrak{sl}_2)$ are labeled by $\theta\in\mathbb{N}$ (corresponding to the dimension of the representation) and given by $(\theta+1)\times(\theta+1)$ dimensional matrices
defined by
\begin{equation}
\label{stand-repr1}
\left\{
\begin{array}{lll}
\A^+ |n\rangle &=& \sqrt{[\theta-n]_q [n+1]_q}\;| n +1 \rangle
\\
\A^- |n\rangle &=& \sqrt{[n]_q [\theta-n+1]_q} \;| n-1 \rangle
\\
\A^0 |n\rangle &=& (n- \theta/2) \;| n \rangle \;.
\end{array}
\right.
\end{equation}

\paragraph*{General case.}
It is possible to collect in a general expression the above defined representations \eqref{stand-repr} and \eqref{stand-repr1}. Recalling the parameter $\si \in \{-1,1\}$ introduced in Section \ref{sigma}, we can write the ladder operators as
\begin{equation}
\label{stand-repr0}
\left\{
\begin{array}{lll}
\A^+ |n\rangle &=& \sqrt{ [\theta+\sigma n]_q [n+1]_q}\;| n +1 \rangle
\\
\A^- |n\rangle &=& -\sigma \sqrt{ [n]_q [\theta +\sigma(n-1)]_q} \;| n-1 \rangle
\\
\A^0 |n\rangle &=& (n+\sigma\theta/2) \;| n \rangle \;.
\end{array}
\right.
\end{equation}
The Casimir element is represented by the diagonal matrix
\[
{C} |n\rangle =[\sigma\theta/2]_q[\sigma\theta/2-1]_q |n\rangle\;.
\]
The adjoints of the operators $A^\pm$ and $A^0$ are given by
\be\label{transp}
(\A^+)^* = -\sigma \A^- \qquad \text{and} \qquad ({{\A}^0})^*={{\A}^0}.
\ee
It is then easily seen that $C^*=C$.

\begin{remark}
The representations we consider are irreducible $*$-representations of two real forms of $\mathcal U_q(\mathfrak{sl}_2)$: for $\si=+1$ we have the discrete series representations of the noncompact real form $\mathcal U_q(\mathfrak{su}(1,1))$, and for $\si=-1$ we have the irreducible representations of the compact real form $\mathcal U_q(\mathfrak{su}(2))$. {Note that for $\si=+1$ we have a representation by unbounded operators. As a dense domain we can take the set of finite linear combinations of basis vectors.}
\end{remark}

\subsection{Construction of  the process from the quantum Hamiltonian}
\label{proc}

\subsubsection*{The quantum Hamiltonian}
\label{q-h}

\vspace{0.2cm}
\noindent
We define the algebraic version of the quantum Hamiltonian $H$ as a sum of coproducts of the Casimir element $C$ given by \eqref{casimir}. The quantum Hamiltonian we are interested in is then the corresponding operator in the representation \eqref{stand-repr0} plus a constant depending on the representation.
\bd[Quantum Hamiltonian]
\label{def-qh}
For $L\in\mathbb{N}$,  $L\ge 2$, the element $H=H^{\phantom x}_{(L)} \in \mathcal U_q(\mathfrak{sl}_2)^{\otimes L}$ is defined by
\be
\label{hami}
H
:=  \sum_{i=1}^{L-1} \bigg\{\underbrace{1\otimes\cdots \otimes 1}_{(i-1) \text{ times}} \otimes \Delta(C) \otimes
\underbrace{1 \otimes \cdots \otimes 1}_{(L-i-1) \text{ times}}\bigg\}\;,
\ee
Then the quantum Hamiltonian $\mathcal H = \mathcal H_{(L)}(\sigma \theta)$ is the operator
\[
\mathcal H = H + c,
\]
where $H$ is the operator in the representation \eqref{stand-repr0} and $c=c_{(L)}(\sigma\theta)$ is a constant uniquely determined by the condition $\mathcal H |0\rangle^{\otimes L}= 0$.

\ed
\vskip.2cm
\noindent
From here on we fix a representation, or equivalently we fix the values of $\si$ and $\theta$, such that $\mathcal H=H+c$. So by $A \in \mathcal U_q(\mathfrak{sl}_2)$ we mean the corresponding operator.
Observe that the quantum Hamiltonian satisfies $\mathcal H^t=\mathcal H$, and that the condition $\mathcal H |0\rangle^{\otimes L} =0$ uniquely determines $c\in \R$, because the state  $|0\rangle \otimes |0\rangle$ is a right eigenvector of $\Delta(C)$.
\noindent
 From \eqref{casimir} and \eqref{co-product2} we have that
\be\label{C}
\begin{split}
\Delta(C) & =
\Delta({\A^+})\Delta({\A^-})+ \Delta([{{\A}^0}]_q) \Delta([{{\A}^0}-1]_q) \\
& =  (q^{{{\A}^0}} \otimes 1)\Bigg \{ {\A^+} \otimes {\A^-}+ {\A^-} \otimes {\A^+}\Bigg \} (1 \otimes q^{-{{\A}^0}})+
 {\A^+} {\A^-} \otimes q^{-2{{\A}^0}} + q^{2{{\A}^0}} \otimes {\A^+} {\A^-}   \\
 & \quad +  \frac{1}{(q-q^{-1})^2}
 \left\{ q^{2{{\A}^0}-1} \otimes q^{2{{\A}^0}} +  q^{1-2{{\A}^0}} \otimes q^{-2{{\A}^0}} -(q+q^{-1}) \right\}.
\end{split}
\ee

One can check that the constant $c$ needed to have $\mathcal H|0\rangle^{\otimes L}=0$  is given by
\be
\label{const}
c={-(L-1)}[\sigma\theta]_q [\sigma\theta-1]_q.
\ee

\noindent
In \cite{CGRS} and \cite{CGRS1} the ASIP($q,\theta$) and ASEP($q,\theta$)  have been constructed from the quantum Hamiltonian via a ground-state transformation. It is possible  to produce a symmetry of the processes by applying the same ground state transformation to a symmetry of the Hamiltonian. The strategy is contained in the following result that has
 been proven in Section 2.1 of \cite{CGRS}.
\bt[Positive ground state transformation]\label{corooo}
Let $\mathcal H$ be a {$|\Omega|\times |\Omega|$} matrix with non-negative off diagonal elements.
Suppose there exists a column vector
$g \in \R^{{|\Omega|}}$
 with strictly positive entries and such that
$\mathcal H g=0$. Let us denote by $G$ the diagonal matrix with entries
{$G(x,x)=g(x)$ for $x\in\Omega$}.
Then we have the following
\begin{itemize}
\item[a)] The matrix
\[
\caL= G^{-1} \mathcal H G
\]
with entries
{
\be\label{amodi}
\caL(x,y) = \frac{\mathcal H(x,y) g(y)}{g(x)}, \qquad x,y \in \Omega\times\Omega
\ee
}
is the generator of a Markov process $\{X_t:t\geq 0\}$ taking values on {$\Omega$}.
\item[b)] $S$ commutes with $\mathcal H$ if and only if $G^{-1} S G$ commutes with $\caL$.
\item[c)] If $\mathcal H=\mathcal H^t$, where $^t$ denotes transposition, then
the probability measure {$\mu$} on {$\Omega$}
{
\be\label{revmes}
\mu(x)= \frac{(g(x))^2}{\sum_{x\in\Omega} (g(x))^2}
\ee
}
is reversible for the process with generator $\caL$.
\end{itemize}
\et

\vskip.2cm
\noindent
The constructive procedure to obtain a suitable ground state matrix $G$ as in Theorem \ref{corooo} is explained in \cite{CGRS} and \cite{CGRS1}. In this paper, as we already know the target processes and corresponding generators ${\cal L}^{\text{\tiny{ASIP}}}$ and ${\cal L}^{\text{\tiny{ASEP}}}$, we restrict ourselves to noticing that, using item c) of Theorem \ref{corooo}, the entries of the ground-state vector $g$ can be written in terms of the reversible measures
${\mu_\alpha^{\text{\tiny{ASIP}}}}$ and ${\mu_\alpha^{\text{\tiny{ASEP}}}}$ given by \eqref{stat-measasip} and \eqref{stat-measasep}.

\subsubsection*{Ground state transformation}
\label{sec5.3}

Let $\mu_\alpha=\mu_{\alpha,\sigma}$, $\alpha\in \R\setminus\{0\}$ be the reversible signed measure defined in \eqref{SM} (in this section we will often omit the dependence on $\sigma$). Then the vectors
\be\label{g}
g_\alpha(x) = \sqrt{\mu_\alpha(x)}
\ee
are  ground states for $\mathcal H$. Notice that, for negative values of $\alpha$, the vector $g_\alpha$ has entries taking values in $\mathbb C$. The diagonal matrix $G_{\alpha}$  represented by a diagonal matrix whose coefficients in the standard basis are given  by \eqref{g}, i.e.
\begin{equation}\label{G}
 G_{\alpha} (x,n)= \sqrt{\mu_\alpha(x)} \cdot \delta_{x,n}\;,
\end{equation}
yields a ground state transformation as in Theorem \ref{corooo} .
For simplicity we denote by $G$ the matrix obtained for the choice $\alpha=1$, $G=G_1$, in which case Theorem \ref{corooo}  applies since the measure $\mu_{1}$ is finite and strictly positive. We have, as a consequence of item a) of Theorem \ref{corooo}, that
the operator ${\cal L}$ conjugated to $\mathcal H$ via $G^{-1}$, i.e.
\be
\label{tildehami}
{\cal L}= G^{-1}\mathcal H G
\ee
is the generator of a Markov jump process $x(t) = (x_1(t),\ldots,x_L(t))$
describing particles jumping on the
chain $\Lambda_L$. In \cite{CGRS} and \cite{CGRS1} it has been proved that the operator $\caL$ is the generator of the ASIP($q,\theta$) and ASEP($q,\theta$), respectively, depending on the choice of $\sigma$. As a consequence of item b) of Theorem \ref{corooo}, if $S$ is a symmetry of $\mathcal H$ (i.e.~$[\mathcal H,S]=0$), then $G^{-1}S G$ is a symmetry of $\caL$.

\vskip.2cm
\noindent
The following proposition, proven in \cite{CGRS1}, allows to construct a duality function for ASIP and ASEP starting from a symmetry of the Hamiltonian.
\bp\label{SD}
If $S$ is a symmetry of $\mathcal H$ then
\begin{itemize}
\item $G^{-1}SG$ is a symmetry for $\mathcal L$,
\item
$D_{1,\alpha}:=G_{\alpha}^{-1}SG_{\alpha}^{-1}$
is a self-duality function for $\mathcal L$,
\item
$D_{2,\alpha}:=G_{\alpha}^{-1}(S^t)^{-1}G_{\alpha}^{-1}$
is a self-duality function for $\mathcal L$,
\item $D_{1,\alpha}$ and $D_{2,\alpha}$ are orthogonal with respect to the measure $G^2_\alpha(x)$, i.e. $D_{1,\alpha}G^2_\alpha D_{2,\alpha}^t=G^{-2}_\alpha$.
\end{itemize}
\ep

\subsubsection*{Symmetries}

At this aim we  need a non-trivial symmetry which yields a non-trivial ground state.
Starting from the basic symmetries of $H$  and inspired by the analysis of the symmetric case ($q\rightarrow 1$),
it will be convenient to consider the {\em exponential} of those symmetries.

\subsection{Symmetries associated to the self-duality functions} \label{symm}

We use the following $q-$exponential functions:
\beq
&& E_{q^2}(z):=(-z)_\infty=\sum_{n=0}^\infty q^{n(n-1)}\;\frac{z^n}{(q^2)_n}, \qquad
 e_{q^2}(z):=\frac{1}{(z)_\infty}=\sum_{n=0}^\infty \frac{z^n}{(q^2)_n}\quad \text{for }\:|z|<1.\nn
\eeq
These satisfy $e_{q^2}(z)E_{q^2}(-z)=1$, and the following factorization rules: if $x$ and $y$ satisfy $xy = q^2yx$, then
\beq\label{fact}
&&  E_{q^2}(x+y)=E_{q^2}(x)\cdot E_{q^2}(y)\quad \text{and} \quad e_{q^2}(x+y)=e_{q^2}(y)\cdot e_{q^2}(x).
\eeq
With the $q-$exponential functions we define the following operators: for $\alpha>0$
\[
\begin{split}
S^{\mathrm{tr}}_\alpha &:= e_{q^2}\left(\sqrt \alpha(1-q^2) \cdot \Delta^{L-1}(q^{{{\A}^0}}{\A^+})\right),\\
 \widehat S^{\mathrm{tr}}_\alpha &:=E_{q^2}\left(\sqrt \alpha q^{\tfrac{\sigma\theta}2(1+2L)}(1-q^2)\Delta^{L-1}(q^{-{{\A}^0}}{\A^+})\right).
\end{split}
\]
In case we work in an infinite dimensional representation, i.e.~$\sigma=+1$, we should be careful with convergence of the series obtained from applying these operators to functions. If we apply these operators only to finitely supported functions there are no convergence issues.
We have the following lemma.
\bl\label{Sym}
For all $\alpha>0$, $\widehat S^{\rm{tr}}_\alpha$ and $ S^{\rm{tr}}_\alpha$ are symmetries of $\mathcal H$, i.e.
\beq
[\mathcal H,  S^{\rm{tr}}_\alpha]=0, \qquad  [\mathcal H,  \widehat S^{\rm{tr}}_\alpha]=0.
\eeq
\el
\bpr
This follows from the fact that $\A^+$ and $\A^0$ commute with the Casimir operator $C$, and then $\Delta^{L-1}(q^{\pm{{\A}^0}}{\A^+})$ commutes with $1\otimes \ldots \otimes 1 \otimes \Delta(C) \otimes 1 \otimes \ldots \otimes 1$. See Section 4 of  \cite{CGRS} for more details.
\epr
\subsection*{Triangular dualities}

In the spirit of Section 4 of \cite{CGRS}, the following proposition shows that we can write the triangular dualities in terms of the symmetries $S^{\mathrm{tr}}_\alpha$ and $\widehat S^{\mathrm{tr}}_\alpha$ given in Lemma \ref{Sym}.
We first define two diagonal matrices by
\beq\label{diag}
&&A(x,n):= q^{N^2(x)} \, \delta_{x,n}, \qquad B(x,n):= q^{N(x)} \, \delta_{x,n}
\eeq
\bp\label{TrDu}
Let $D_\lambda^{\mathrm{tr}}$ and $\widehat D_\lambda^{\mathrm{tr}}$ be the triangular self-duality functions defined in \eqref{tria1asip}, then we have:
\beq\label{D1}
 D_{1/\alpha q}^{\rm{tr}}=B^{-1}G^{-1}_\alpha  S^{\rm{tr}}_{\alpha} G^{-1}_\alpha A
 \eeq
and
\beq\label{D2}
  \widehat D_{q/\alpha}^{\rm{tr}}=BG^{-1}_\alpha \widehat S^{\rm{tr}}_{\alpha} G^{-1}_\alpha A^{-1}.
\eeq
\ep
\noindent
Here we consider a duality function $D$ as  the matrix with elements $D(x,n)$, while we denote $D^{t}$ the transpose matrix. The proof of Proposition \ref{TrDu} is given in Section \ref{proof of TrDu}.

\subsection*{Orthogonal dualities}
Now we fix $\alpha>0$ and  use Proposition \ref{TrDu} and the expression \eqref{qua}  to write the orthogonal dualities
and the associated symmetries in terms of  the symmetries $S^{\mathrm{tr}}_\alpha$ and $\widehat S^{\mathrm{tr}}_\alpha$. We first define the following diagonal operators:
\[
\begin{split}
	M(x,n)&:= (-1)^{N(x)} \, \delta_{x,n},\\
	R_\alpha(x,n)&:= (-\sigma \alpha q^{1+2N(x) +\sigma\theta(2 L+1)})_\infty\, \delta_{x,n},\\
	T_\alpha(x,n)&:= (-\sigma\alpha q^{1-2N(x)+\sigma\theta})_\infty  \, \delta_{x,n}.
\end{split}
\]
Let $\D_{\alpha,\sigma}$  be the normalized orthogonal self-duality function defined  in equation \eqref{normalizedselfduality} and $\s_{\alpha,\sigma}$ its associated symmetry \eqref{S} then we have
\be\label{dd}
\D_\alpha= G_\alpha^{-1} (ABR_\alpha)^{\tfrac 12} \cdot \widehat S^{\mathrm{tr}}_\alpha M  (S^{\mathrm{tr}}_\alpha)^t \cdot (ABT_\alpha)^{-\tfrac 1 2} G_\alpha^{-1}
\ee
and
\be\label{ss}
\s_\alpha= G_\alpha^{-1} (ABR_\alpha)^{\tfrac 12} \cdot \widehat S^{\mathrm{tr}}_\alpha M  (S^{\mathrm{tr}}_\alpha)^t \cdot (ABT_\alpha)^{-\tfrac 1 2} G_\alpha.
\ee
\bpr
From \eqref{qua} we have that $D_\alpha$ can be given in terms of scalar products of the triangular dualities. In matrix form this reads
\be
D_\alpha=\widehat D^{\rm{tr}}_{q/\alpha} G^{2}_{-\alpha} (D^{\rm{tr}}_{1/\alpha q})^t,
\ee
then, using
 the expressions in Proposition \ref{TrDu}, it follows that
 \be\label{OrtDu}
D_\alpha= B G^{-1}_\alpha \widehat S^{\rm{tr}}_{\alpha}M \(S^{\rm{tr}}_{\alpha}\)^t G^{-1}_\alpha B^{-1}.
\ee
Then \eqref{dd} follows from
\be \label{OrtDu2}
\D_\alpha=(AB^{-1}R_\alpha)^{\tfrac 12 } D_\alpha (A^{-1}BT_\alpha^{-1})^{\tfrac 1 2}
\ee
and \eqref{ss} follows from \eqref{dd} and the fact that
\be
\s_{\alpha}= \D_{\alpha}G^2_\alpha.
\ee
This concludes the proof.
\epr

\br
Notice that we can rewrite the orthogonality relation \eqref{Ort} of $\D_\alpha$ as
\be
\D_\alpha^t G_\alpha^2\D_\alpha=G_\alpha^{-2}
\ee
and the unitarity property of $\s_\alpha$
as follows:
\be
\s_\alpha^t G_\alpha^2 \s_\alpha=G_\alpha^2.
\ee
{These identities imply relations between $q$-exponentials of generators of $\mathcal U_q(\mathfrak{sl}_2)$. Such relations have been exploited in e.g.~\cite{KMM},\cite{GV} to obtain orthogonality relations for specific $q$-hypergeometric functions.}
\er

\br
 In the infinite dimensional setting, $\si =  +1$, this should be interpreted as a formal identity; as this is an identity involving unbounded operators, the above calculation is not all rigorous.

\er

\subsection{Proof of Proposition \ref{TrDu}.} \label{proof of TrDu}
We first compute the action of the symmetries associated to the triangular dualities.
\subsubsection*{Action of   $ S^{\mathrm{tr}}_\alpha$.}
We have
\beq
  S^{\mathrm{tr}}_\alpha= e_{q^2}(\sqrt \alpha  (1-q^2) \cdot \Delta^{L-1}(q^{{{\A}^0}}{\A^+})),\nn
\eeq
where
\beq
&& \Delta^{L-1}(q^{+{{\A}^0}}{\A^+})= q^{{{\A}}_1^0} {\A}_1^+ + q^{2{{\A}}_1^0 +{{\A}}_2^0} {\A}_2^+ + ...+  q^{2 \sum_{i=1}^{L-1} {{\A}}_i^0 + {{\A}}_L^0} {\A}_L^+. \nn
\eeq
From \eqref{comm-new} we know that
\beq
q^{2{{\A}^0}} q^{{{\A}^0}}{\A^+} =q^2  q^{{{\A}^0}}{\A^+} q^{2{{\A}^0}},
\eeq
then from \eqref{fact} we have
\beq
 S^{\mathrm{tr}}_\alpha= S_1^+  S_2^+ \ldots  S_L^+\nn
\eeq
with
\beq
 S^+_i=e_{q^2}(\sqrt \alpha (1-q^2) q^{2\sum_{m=1}^{i-1}\A_m^0 +\A_i^0}\A_i^+).\nn
\eeq
Then, for $\sigma=1$,
\beq
 S^{\mathrm{tr}}_\alpha|n\rangle=\sum_{\ell_1,\ldots,\ell_L}\prod_i \sqrt{\binom{n_i+\ell_i}{\ell_i}_q\cdot \binom{n_i+\ell_i+\theta -1}{\ell_i}_q} \cdot q^{\ell_i(n_i+\theta/2+1)+2\ell_iN^-_{i-1}(n+ \theta/2)} \alpha^{\tfrac{\ell_i}2}  |n+\ell\rangle,\nn
\eeq
so that
\beq
 S^{\mathrm{tr}}_\alpha(x,n)=\prod_i \sqrt{\binom{x_i}{n_i}_q\cdot \binom{x_i+\theta -1}{n_i+\theta -1}_q} \cdot q^{(x_i-n_i)[(n_i+\theta/2+1)+2N^-_{i-1}(n+\theta/2)]}\alpha^{\tfrac{x_i-n_i}2} \cdot \mathbf 1_{x_i\ge n_i}.\nn
\eeq
For $\sigma=-1$,
\beq
 S^{\mathrm{tr}}_\alpha(x,n)=\prod_i \sqrt{\binom{x_i}{n_i}_q\cdot \binom{\theta- n_i}{\theta- x_i}_q} \cdot  q^{(x_i-n_i)[(n_i-\theta/2+1)+2N^-_{i-1}(n-\theta/2)]} \alpha^{\tfrac{x_i-n_i}2} \cdot \mathbf 1_{x_i\ge n_i}.\nn
\eeq

\subsubsection*{Action of $\widehat S^{\mathrm{tr}}_\alpha$.} We have
\beq
\widehat S^{\mathrm{tr}}_\alpha:=E_{q^2}(\sqrt \alpha q^{\tfrac{\sigma\theta}2(1+2L)}(1-q^2)\Delta^{L-1}(q^{-{{\A}^0}}{\A^+})),\nn
\eeq
where
\beq
&& \Delta^{L-1}(q^{-{{\A}^0}}{\A^+})= q^{-{{\A}}_1^0} {\A}_1^+ + q^{-2{{\A}}_1^0 -{{\A}}_2^0} {\A}_2^+ + ...+  q^{-2 \sum_{i=1}^{L-1} {{\A}}_i^0 - {{\A}}_L^0} {\A}_L^+. \nn
\eeq
From \eqref{comm-new} we know that
\beq
q^{-2{{\A}^0}} q^{-{{\A}^0}}{\A^+} =q^{-2}  q^{-{{\A}^0}}{\A^+} q^{-2{{\A}^0}},
\eeq
then, from \eqref{fact} we have
\beq
 \widehat S^{\mathrm{tr}}_\alpha= \widehat S_L^+  \widehat S_{L-1}^+ \ldots  \widehat S_1^+\nn,
\eeq
with
\beq
 \widehat S^+_i=E_{q^2}(\sqrt \alpha q^{\tfrac{\sigma\theta}2(1+2L)}(1-q^2) q^{-2\sum_{m=1}^{i-1}\A^0_m -\A^0_i}\A^+_i).\nn
\eeq
Then it follows that
\beq
\widehat S^{\mathrm{tr}}_\alpha(x,n) =\prod_i \sqrt{\binom{x_i}{n_i}_q \cdot\binom{x_i+\theta -1}{n_i+\theta -1}_q} \alpha^{\tfrac{x_i-n_i}2} \cdot
q^{-(x_i-n_i)[2N^+_{i+1}(n+\theta/2)+(n_i-\theta L+1)]} \cdot \mathbf 1_{x_i\ge n_i} \nn
\eeq
for $\sigma=+1$ and
\beq
\widehat S^{\mathrm{tr}}_\alpha(x,n) =\prod_i \sqrt{\binom{x_i}{n_i}_q \cdot\binom{\theta- n_i }{\theta- x_i}_q} \alpha^{\tfrac{x_i-n_i}2}\cdot
q^{-(x_i-n_i)[2N^+_{i+1}(n-\theta/2)+(n_i+\theta L+1)]} \cdot \mathbf 1_{x_i\ge n_i} \nn
\eeq
for $\sigma=-1$.

\vskip.4cm
\noindent
To complete the proof we will make  use of the following Lemma:
\bl\label{L:formula}
 For $n\ge m$,
\be\label{formula}
\sqrt{\frac{\binom{n}{m}_q\cdot \binom{n+\theta -1}{m+\theta -1}_q}{\binom{m+\theta -1}{m}_q\cdot \binom{n+\theta -1}{n}_q}}=\frac{\binom{n}{m}_q}{\binom{m+\theta -1}{m}_q}
\qquad
\text{and}
\qquad
\sqrt{\frac{\binom{n}{m}_q\cdot \binom{\theta-m}{\theta-n}_q}{\binom{\theta}{m}_q\cdot \binom{\theta}{n}_q}}=\frac{\binom{n}{m}_q}{\binom{\theta}{m}_q}.
\ee
\el
\vskip.4cm
\noindent
Now we can conclude the proof of  Proposition \ref{TrDu}.
\subsubsection*{Proof of \eqref{D1}}
Using \eqref{formula} and \eqref{G} we find that the corresponding triangular duality is given by
\[
\begin{split}
G^{-1}_\alpha S^{\mathrm{tr}}_\alpha G^{-1}_\alpha(x,n)  =\prod_i &\frac{\binom{x_i}{n_i}_q}{\Psi_{q,\sigma}(\theta, n_i)} \;q^{(x_i-n_i)[(n_i+\theta/2+1)+2N^-_{i-1}(n+\theta/2)]-\theta  i(n_i+x_i)} \cdot  \, \alpha^{- n_i}.
\end{split}
\]
Now,
using that
\beq
\sum_i(x_i-n_i)[(n_i+\theta/2)+2N^-_{i-1}(n+\theta/2)-\theta i]=-N^2(n)+\sum_i x_i(n_i+2N_{i-1}^-(n))
\eeq
we get
\[
\begin{split}
G^{-1}_\alpha  S^{\mathrm{tr}}_\alpha G^{-1}_\alpha(x,n)& =q^{-N^2(n)+N(x)}  \cdot \prod_i \frac{\binom{x_i}{n_i}_q}{\Psi_{q,\sigma}(\theta, n_i)} \cdot q^{x_i(n_i+2N^-_{i-1}(n))}q^{-2\theta  in_i}\cdot \(\frac 1 {\alpha q}\)^{n_i}.
\end{split}
\]
Comparing this with \eqref{tria1asip} we obtain
\beq
G^{-1}_\alpha  S^{\mathrm{tr}}_\alpha G^{-1}_\alpha(x,n)=q^{-N^2(n)+N(x)} \cdot D_{1/\alpha q}^{\mathrm{tr}}(x, n )\nn
\eeq
from which the statement follows.

\subsubsection*{Proof of \eqref{D2}.}
Using  \eqref{formula} and \eqref{G} we obtain
\[
\begin{split}
G^{-1}_\alpha \widehat S^{\mathrm{tr}}_\alpha G^{-1}_\alpha(x,n) =\prod_i & \frac{\binom{x_i}{n_i}_q}{\Psi_{q,\sigma}(\theta, n_i)}
q^{-(x_i-n_i)[2N^+_{i+1}(n+\theta/2)+(n_i-\theta L+1)]-\theta  i(x_i+n_i)}  \alpha^{- {n_i}}.
\end{split}
\]
We use that
\[
\begin{split}
\sum_i(x_i-n_i)[2N^+_{i+1}(n+\theta/2) & +(n_i+\theta/2)+\theta i]= \\
& -N^2(n)+\theta/2(1+2L)N(x-n)+\sum_i n_i(2N^-_{i-1}(x)+x_i)
\end{split}
\]
to get
\[
\begin{split}
G^{-1}_\alpha  \widehat S^{\mathrm{tr}}_\alpha G^{-1}_\alpha(x,n) & = q^{N^2(n)-N(x)}  \cdot\prod_i\frac{\binom{x_i}{n_i}_q}{\Psi_{q,\sigma}(\theta, n_i)}\cdot
q^{-n_i[2N^-_{i-1}(x)+x_i]-2\theta  in_i} \cdot (\tfrac q \alpha)^{n_i} .
\end{split}
\]
Then comparing with \eqref{tria2asip} we obtain
\beq
G^{-1}_\alpha \widehat S^{\mathrm{tr}}_\alpha G^{-1}_\alpha(x,n) = q^{N^2(n)-N(x)}  \cdot  \widehat D_{q/\alpha}^{\mathrm{tr}}( x, n )\;,\nn
\eeq
which is the desired result. \qed

\section{Appendix} \label{appendix}
See Section \ref{ssec:notation} for the definition of the $q-$binomial coefficients and $q-$Pochhammer symbols. We refer to Appendix I of \cite{gasper2004basic} for the formulas involving $q-$Pochhammer symbols. Identity \eqref{qbinid} follows directly from the definition of the $q-$binomial coefficient, and \eqref{newton} is (a special case of) the $q-$binomial formula \cite[(II.4)]{gasper2004basic}.
\subsection{Identities for $q-$binomial coefficients }

 For $n,x,y \in \N$,
\be \label{qbinid}
{\binom{y}{n}_q \binom{x}{y}_q}= {\binom{x}{n}_q \binom{x - n}{y - n}_q},
\ee
moreover,   for $N \in \N$ and $t \in \R$,
\be \label{newton}
\sum_{\kappa =0}^N \binom{N}{\kappa}_q \, q^{\kappa N}  \(tq^{-1}\)^\kappa=\prod_{\kappa=1}^N (1+t q^{2(\kappa -1)})=(-t)_N.
\ee
\subsection{Identities for $q-$Pochhammer symbols}
 For $n, m \in \N$ and $a \neq 0$ we have
\begin{equation} \label{sum}
  (a)_{n+m} = (a)_m (aq^{2m})_n
\end{equation}
\begin{equation} \label{diff}
 (a)_{m-n} = \frac{ (a)_m }{ (q^{2-2m}/a)_n } \left( -\frac{q^2}{a} \right)^n q^{n(n-1) - 2mn} \;,
\end{equation}
moreover,  for $b\neq 0$, $c\neq 0$,
\begin{equation} \label{1.8.15koeoek}
\dfrac{(bq^{-2n})_{n}}{(cq^{-2n})_{n}} = \left( \dfrac{b}{c} \right)^{n} \dfrac{(b^{-1}q^{2})_{n}}{(c^{-1}q^{2})_{n} } \;,
\end{equation}
 finally, for $n,m \in \N$, $n \geq m$,
\begin{equation} \label{eq:id Pochhammer}
\frac{(q^2)_n}{(q^2)_{n-m} (q^{-2n})_m } = (-1)^m q^{2mn- m(m-1)}.
\end{equation}
\subsection{Identities for $q-$hypergeometric functions}
We refer to the book \cite{gasper2004basic} for theory on $q-$hypergeometric functions. Here we only use the $q-$hypergeometric function
\be
\label{eq:2phi1}
\rphis{2}{1}{a, b}{c}{q,z} := \sum_{k=0}^{\infty}  \dfrac{(a;q)_{k} (b;q)_{k}  }{(c;q)_{k}  }
\dfrac{z^k}{(q;q)_{k}} \;.
\ee
where, as before, $(a;q)_k = \prod_{i=0}^{k-1} (1-aq^{i})$. We always assume that $c \not\in q^{-\N}$, so that the denominator never equals zero. The series converges absolutely for $|z|<1$. Note that for $a=q^{-n}$, $n \in \N$, the series terminates after the $(n+1)$-th term; in this case the series is a polynomial of degree $n$ in $b$.

\noindent
The $_2\varphi_1$-functions we encounter in this paper will depend on $q^2$ instead of $q$. We need the following two transformation formulas for $_2\varphi_1$-functions. The first is one of Heine's transformation formulas, see \cite[(III.3)]{gasper2004basic}, which is valid as long as the series on both sides converge. The second one is only valid for a terminating $_2\varphi_1$-series, and is obtained from reversing the order of summation.\\

\noindent
Heine's transformation:
\begin{equation} \label{euler}
\rphis{2}{1}{a, b}{c}{q^{2},z} = \frac{ (\frac{abz}{c})_\infty }{(z)_\infty} \rphis{2}{1}{c/a, c/b}{c}{q^{2},\frac{abz}{c}}.
\end{equation}
Transformation for terminating series:
\beq\label{change}
&&\rphis{2}{1}{q^{-2n}, b}{c}{q^{2},z} = \frac{  (b)_n }{(c)_n }q^{-n-n^2} (-z)^n\rphis{2}{1}{q^{-2n}, q^{2-2n}c^{-1}}{q^{2-2n}b^{-1}}{q^{2}, \frac{cq^{2+2n}}{bz}}.\nn\\
\eeq

\begin{lemma} \label{applemm} For $n,x,y\in \N$, $x\le n$, $m\in \R^+$ and $|C|<q^{x+m+n-1}$ we have
\begin{equation*}
\begin{split}
& \sum_{y =n }^\infty  \binom{y}{x}_q \binom{y}{n}_q \binom{y+m-1}{y}_q C^y\\
 & = \frac{ C^n q^{n(1-m-x)}(q^{2m})_n  {q^{x^2}}}{(q^2)_x (q^2)_{n-x} } \rphis{2}{1}{q^{2(n+1)}, q^{2(m+n)}}{q^{2(1+n-x)}}{q^2, Cq^{1-x-n-m}}.
\end{split}
\end{equation*}
\end{lemma}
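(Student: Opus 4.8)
The plan is to collapse the sum into a single ${}_2\phi_1$-series by converting every $q$-binomial coefficient into $q^2$-shifted factorials and then shifting the summation index. First I would rewrite the three factors using \eqref{qbinom2} and \eqref{qbinom3}. For $y \ge n \ge x$ this gives
$\binom{y}{x}_q \binom{y}{n}_q \binom{y+m-1}{y}_q = q^{x^2+n^2+y(1-x-n-m)}\,\dfrac{(q^2)_y (q^{2m})_y}{(q^2)_x (q^2)_{y-x}(q^2)_n (q^2)_{y-n}}$,
where one factor $(q^2)_y$ has cancelled. The only bookkeeping here is collecting the three prefactor powers $q^{-x(y-x)}$, $q^{-n(y-n)}$ and $q^{-y(m-1)}$ produced by \eqref{qbinom2}--\eqref{qbinom3} into the single exponent $x^2+n^2+y(1-x-n-m)$.

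Next I would substitute $y = n+k$, $k \ge 0$, and split each shifted factorial at its natural point by means of \eqref{sum}: namely $(q^2)_{n+k} = (q^2)_n (q^{2(n+1)})_k$, $(q^{2m})_{n+k} = (q^{2m})_n (q^{2(m+n)})_k$, $(q^2)_{(n+k)-n} = (q^2)_k$, and $(q^2)_{(n+k)-x} = (q^2)_{n-x}(q^{2(1+n-x)})_k$ (using $n-x \ge 0$). After the factor $(q^2)_n$ cancels, the summand factorizes into a $k$-independent prefactor times a $k$-dependent ratio. Using $n^2 + n(1-x-n-m) = n(1-x-m)$, the prefactor comes out as exactly $\dfrac{C^n q^{n(1-m-x)}(q^{2m})_n q^{x^2}}{(q^2)_x(q^2)_{n-x}}$, which matches the right-hand side verbatim.

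Finally I would identify the remaining series in $k$. The ratio $\dfrac{(q^{2(n+1)})_k(q^{2(m+n)})_k}{(q^{2(1+n-x)})_k (q^2)_k}$ together with $(Cq^{1-x-n-m})^k$ is precisely the general term of $\rphis{2}{1}{q^{2(n+1)},q^{2(m+n)}}{q^{2(1+n-x)}}{q^2, Cq^{1-x-n-m}}$ read off from the definition \eqref{eq:2phi1} in base $q^2$. Since $n \ge x$ forces $1+n-x \ge 1$, the lower parameter $q^{2(1+n-x)}$ never lies in $q^{-2\N}$, so no denominator vanishes; moreover the series converges absolutely precisely when $|Cq^{1-x-n-m}|<1$, i.e.\ $|C|<q^{x+m+n-1}$, which is the stated hypothesis and simultaneously guarantees that the reindexing $y=n+k$ and the termwise identification as a ${}_2\phi_1$ are legitimate.

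I do not expect a genuine obstacle here: the whole argument is a bookkeeping exercise in powers of $q$ and in the splitting of $q^2$-Pochhammer symbols. The only mildly delicate point is that $(q^2)_{y-x}$ must be split at $n-x$ (not at $x$), which is exactly where the hypothesis $x \le n$ enters and ensures that the lower ${}_2\phi_1$ parameter is a nonnegative power of $q^2$.
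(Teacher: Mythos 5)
Your proposal is correct and follows essentially the same route as the paper's own proof: both convert the $q$-binomials to $q^2$-shifted factorials via \eqref{qbinom2}--\eqref{qbinom3}, shift the summation index by $n$, split the Pochhammer symbols with \eqref{sum} (using $x\le n$ exactly where you say, to split $(q^2)_{y-x}$ at $n-x$), and read off the resulting series as the stated ${}_2\varphi_1$ from \eqref{eq:2phi1}. Your exponent bookkeeping, the prefactor simplification $n^2+n(1-x-n-m)=n(1-x-m)$, and the convergence condition $|C|<q^{x+m+n-1}$ all match the paper's computation.
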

\bpr
By some algebraic manipulation of the $q-$numbers and changing the variable in the summation we get
\begin{equation*}
\begin{split}
& \sum_{y =n }^\infty  \binom{y}{x}_q \binom{y}{n}_q \binom{y+m-1}{y}_q \; C^y\\
& = \frac{q^{x^2+n^2}}{(q^2)_x (q^2)_n}\sum_{y=n}^\infty \frac{ (q^2)_y (q^{2m})_y }{(q^2)_{y-x} (q^2)_{y-n} }C^y q^{-y(x+n+m-1)} \\
& = \frac{ C^n q^{n(1-m-x-n)} {q^{x^2+n^2}}}{(q^2)_x (q^2)_n } \sum_{r=0}^\infty \frac{ (q^2)_{r+n} (q^{2m})_{r+n}}{(q^2)_{r+n-x} (q^2)_r } \;C^r q^{r(1-m-x-n)}\\
& = \frac{ C^n q^{n(1-m-x)}(q^{2m})_n  {q^{x^2}}}{(q^2)_x (q^2)_{n-x} } \sum_{r=0}^\infty \frac{ (q^{2(n+1)})_{r} (q^{2(m+n)})_{r}}{(q^{2(1+n-x)})_{r} (q^2)_r } \;C^r q^{r(1-m-x-n)} \\
& = \frac{ C^n q^{n(1-m-x)}(q^{2m})_n  {q^{x^2}}}{(q^2)_x (q^2)_{n-x} } \rphis{2}{1}{q^{2(n+1)}, q^{2(m+n)}}{q^{2(1+n-x)}}{q^2, Cq^{1-x-n-m}}.
\end{split}
\end{equation*}
That concludes the proof.
\epr
\begin{lemma} \label{applemma}
For $x,n,m\in \N$ with $x\le n \leq m$ and $C \in \R$ we have
\begin{equation*}
\begin{split}
& \sum_{y =n }^{{m}}  \binom{y}{x}_q \binom{y}{n}_q \binom{m}{y}_q C^y\\
 & = \frac{ (-C)^n q^{n(1+m-x)}(q^{-2m})_n  {q^{x^2}}}{(q^2)_x (q^2)_{n-x} } \rphis{2}{1}{q^{2(n+1)}, q^{-2(m-n)}}{q^{2(1+n-x)}}{q^2, -Cq^{1+m -x-n}}.
\end{split}
\end{equation*}
\end{lemma}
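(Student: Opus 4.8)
The plan is to follow almost verbatim the strategy of the proof of Lemma~\ref{applemm}, the only differences being that the summation is now finite and that the inclusion‐type binomial $\binom{y+m-1}{y}_q$ is replaced by the exclusion‐type binomial $\binom{m}{y}_q$. First I would rewrite each of the three $q$-binomial coefficients in terms of $q$-Pochhammer symbols. For the first two factors I use the first identity in \eqref{qbinom2}, i.e.\ $\binom{y}{x}_q = q^{-x(y-x)}\frac{(q^2)_y}{(q^2)_x(q^2)_{y-x}}$ and similarly for $\binom{y}{n}_q$. For the third factor the key is the second identity in \eqref{qbinom2}, which here reads $\binom{m}{y}_q = (-1)^y q^{y(m+1)}\frac{(q^{-2m})_y}{(q^2)_y}$; this is precisely what will generate both the sign turning $C^y$ into $(-C)^y$ and the shifted factorial $(q^{-2m})_y$ appearing in the target expression.

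Collecting the powers of $q$ (the exponent telescopes to $x^2+n^2+y(m+1-x-n)$) and cancelling the common factors $(q^2)_y$, I would arrive at the intermediate identity
\begin{equation*}
\sum_{y=n}^m \binom{y}{x}_q \binom{y}{n}_q \binom{m}{y}_q C^y = \frac{q^{x^2+n^2}}{(q^2)_x (q^2)_n}\sum_{y=n}^m \frac{(q^2)_y (q^{-2m})_y}{(q^2)_{y-x}(q^2)_{y-n}}(-C)^y q^{y(m+1-x-n)},
\end{equation*}
which is the finite, sign-twisted analogue of the displayed formula in the proof of Lemma~\ref{applemm}. Next I would change the summation index to $r=y-n$, running from $0$ to $m-n$, and split each Pochhammer symbol using \eqref{sum}: $(q^2)_{r+n}=(q^2)_n(q^{2(n+1)})_r$, $(q^{-2m})_{r+n}=(q^{-2m})_n(q^{-2(m-n)})_r$, and $(q^2)_{r+n-x}=(q^2)_{n-x}(q^{2(1+n-x)})_r$. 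Factoring out the $r=0$ contribution leaves a sum over $r$ which, because of the factor $(q^{-2(m-n)})_r$, terminates at $r=m-n$ and hence may be read off directly as the terminating series $\rphis{2}{1}{q^{2(n+1)}, q^{-2(m-n)}}{q^{2(1+n-x)}}{q^2, -Cq^{1+m-x-n}}$, the argument $z=(-C)q^{m+1-x-n}$ matching the one claimed. Simplifying the prefactor (the $(q^2)_n$ cancels and $q^{n^2}q^{n(m+1-x-n)}=q^{n(1+m-x)}$) then yields exactly the claimed constant $\frac{(-C)^n q^{n(1+m-x)}(q^{-2m})_n\,q^{x^2}}{(q^2)_x(q^2)_{n-x}}$ multiplying the $_2\varphi_1$.

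I do not expect any genuine conceptual obstacle: unlike Lemma~\ref{applemm}, the sum here is finite, so no convergence hypothesis on $C$ is required and the hypergeometric series terminates automatically. The only point demanding care is the bookkeeping of signs and of the various powers of $q$ when passing from binomial coefficients to Pochhammer symbols; in particular one must use the $(-1)^y q^{y(m+1)}$ form of $\binom{m}{y}_q$ rather than the ratio‑of‑factorials form, since it is this form that cleanly produces the $(q^{-2m})_y$ inside the sum and the overall factor $(-C)^n$ in the statement.
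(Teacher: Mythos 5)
Your proposal is correct and is essentially the proof the paper intends: the paper omits the argument, stating only that it is ``similar to that of Lemma~\ref{applemm}'', and your adaptation (using the $(-1)^y q^{y(m+1)}(q^{-2m})_y/(q^2)_y$ form of $\binom{m}{y}_q$, shifting $r=y-n$, and splitting Pochhammer symbols via \eqref{sum}) is exactly that omitted adaptation, with all exponents and the prefactor $q^{n^2+n(m+1-x-n)}=q^{n(1+m-x)}$ checking out. The terminating factor $(q^{-2(m-n)})_r$ indeed makes the $_2\varphi_1$ a finite sum, so no convergence condition on $C$ is needed, as you note.
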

\noindent
We omit the proof  of this identity which is similar to that of Lemma \ref{applemm}.
\subsection{$q-$Orthogonal polynomials} \label{app9.4}

\subsubsection*{$q-$Krawtchouk polynomials}\label{Kraw}
The $q-$Krawtchouk polynomials  in the $q$-hypergeometric representation are given by:
\begin{equation} \label{def:q-Krawtchouk}
K_n(q^{-x};p,c;q):=  \rphis{2}{1}{q^{-x},q^{-n}}{q^{-c}}{q,p{q^{n+1}}}, \qquad \text{for} \quad c \in \N, \quad n,x \in \lbrace 0,\ldots, c \rbrace
\end{equation}
and  $_{2}\phi_{1}$ as in definition \eqref{eq:2phi1}. We remark that in the literature \cite{Koekoek} they are known as quantum $q-$Krawtchouk polynomials.
\vskip.2cm
\noindent
{\bf Orthogonality relations.}
 Under the condition
 \be\label{condK}
 pq^{c}>1\qquad \text{and} \qquad c\in \N
 \ee
   these polynomials are orthogonal with respect to a positive measure on $\{0, 1, \ldots, c\}$, see \cite[\S14.14]{Koekoek}.
The orthogonality relations for $q-$Krawtchouk polynomials read as follows:
\beq
&&\sum_{x=0}^c  \;  \frac{ (pq;q)_{c-x}
\;(-1)^{c-x}}{(q;q)_x (q;q)_{c-x}}\;  \; q^{\binom{x}{2}}\cdot K_m(q^{-x};p,c;q)\cdot   K_n(q^{-x};p,c;q)\nn\\
&&=\frac{ (-1)^n \, p^c\, (q;q)_{c-n} (q;q)_n(pq;q)_n}{((q;q)_c)^2 } \; \cdot  q^{\binom{c+1}{2}-\binom{n+1}{2}+cn}   \cdot \delta_{m,n}.\label{orthoK}
\eeq

\subsubsection*{$q-$Meixner polynomials}\label{Meix}

The  $q-$Meixner polynomials in the $q$-hypergeometric representation are given by
\beq \label{q-Meixner}
M_n(q^{-x};b,c;q):=  \rphis{2}{1}{q^{-x},q^{-n}}{bq}{q,-\frac{q^{n+1}}{c}}, \qquad \text{for} \quad x,n \in \N,
\eeq
where $_{2}\phi_{1}$ is the  $q-$hypergeometric function defined in \eqref{eq:2phi1}. Note that $M_n(q^{-x};b,c;q)$ is a polynomial in $q^{-x}$ of degree $n$, but it is also a polynomial in $c^{-1}$ of degree $n$.
We remark the similarity with the $q-$Krawtchouk polynomials: for $c \in \N$ we have
 $K_n(q^{-x};p,c;q)=M_n(q^{-x};q^{-1-c},-p^{-1};q)$. 

\vskip.2cm
\noindent
{\bf Orthogonality relations.}
 Under the conditions
 \be\label{condM}
 bq\in [0,1) \qquad \text{and} \qquad c>0
 \ee
  these polynomials are orthogonal with respect to a positive measure on $\N$, see \cite[\S14.13]{Koekoek}.
The orthogonality relations for $q-$Meixner polynomials read as follows:
\beq
&&\sum_{x=0}^\infty  \;  \frac{ (bq;q)_x
\;c^x}{(q;q)_x (-cbq;q)_x}\;  \; q^{\binom{x}{2}}\cdot M_m(q^{-x};b,c;q)\cdot   M_n(q^{-x};b,c;q)\nn \\
&&=\frac{ (-c;q)_\infty (q;q)_n(-c^{-1}q;q)_n}{(-cbq;q)_\infty (bq;q)_n} \; \cdot  q^{-n}   \cdot \delta_{m,n}.\label{ortho}
\eeq
The function $M_n(q^{-x};b,c;q)$, $x \in \N$, is also a polynomial in $q^n$ of degree $x$. It can be considered as an instance of a rescaled big $q-$Laguerre polynomial, see \cite[\S14.11]{Koekoek},
\[
M_n(q^{-x};b,c;q) = (-q^{-x}/bc;q)_x P_n(bq^{1+n};b,-bc;q).
\]
The big $q-$Laguerre polynomials $P_m(y;\alpha,\beta;q)$ with $0<\alpha q<1$ and $\beta<0$ satisfy orthogonality relations of the form
\[
\begin{split}
\sum_{k=0}^\infty &P_m(\beta q^{1+k};\alpha,\beta;q) P_n(\beta q^{1+k};\alpha,\beta;q) w(\beta q^{k+1}) \\ & + \sum_{k=0}^\infty P_m(\alpha q^{1+k};\alpha,\beta;q) P_n(\alpha q^{1+k};\alpha,\beta;q) w(\alpha q^{k+1}) = \delta_{m,n} N_n,
\end{split}
\]
where the weight function $w$ and the squared norm $N_n$ are known explicitly. We see that the big $q$-Laguerre polynomials are orthogonal with respect to a measure supported on the finite interval $[\beta q,\alpha q]$, hence they form a complete orthogonal basis for the corresponding weighted $L^2$-space. It follows that they also satisfy the dual orthogonality relation
\[
\sum_{n=0}^\infty P_n(y;\alpha,\beta;q) P_n(y';\alpha,\beta;q) N_n^{-1} = \frac{ \delta_{y,y'}}{w(y)}, \qquad y,y'\in \beta q^{1+\N} \cup \alpha q^{1+\N}, \label{dual orth rel}
\]
and the set $\{n \mapsto P_n(y;\alpha,\beta;q) \mid y \in  \beta q^{1+\N} \cup \alpha q^{1+\N} \}$ is a complete orthogonal basis for the weighted $L^2$-space $\ell^2(\N;N_n^{-1})$. The orthogonality relations for the $q$-Meixner polynomials are equivalent to the dual orthogonality relations of the big $q-$Laguerre polynomials for $y \in \alpha q^{1+\N}$, so the $q$-Meixner polynomials do not form a complete basis for their weighted $L^2$-space.

\vspace{1 cm} \noindent
\textbf{Acknowledgment.} The authors thank Cristian Giardin\`{a} and Frank Redig for useful discussion. C.F. acknowledges support from the European Research Council (ERC) under the European Union’s Horizon 2020 research and innovative program (grant agreement No 715734). We also thank the referees for their helpful comments which helped improving the quality of the manuscript.

\end{document}